\newtheorem{theorem}{Theorem}[section]
\newtheorem{lemma}[theorem]{Lemma}
\newtheorem{proposition}[theorem]{Proposition}
\theoremstyle{definition}
\newtheorem{conjecture}[theorem]{Conjecture}
\newtheorem{question}[theorem]{Question}
\theoremstyle{remark}
\numberwithin{equation}{section}
\newcommand{\R}{\ensuremath{\mathbb{R}}}
\newcommand{\N}{\ensuremath{\mathbb{N}}}
\newcommand{\E}{\mathbb{E}}
\newcommand{\F}{\mathcal{F}}
\renewcommand{\O}{\mathcal{O}}
\renewcommand{\L}{\mathcal{L}}
\begin{document}

\title[On the pair correlations of powers of real numbers]{On the pair correlations of powers of real numbers}

\author{Christoph Aistleitner }
\address{Christoph Aistleitner:  Graz University of Technology, Institute of Analysis and Number Theory,	Steyrergasse 30/II,	8010 Graz, Austria}
\email{aistleitner@math.tugraz.at}

\author{Simon Baker}
\address{Simon Baker: School of Mathematics, University of Birmingham, Birmingham,  B15 2TT, UK}
\email{simonbaker412@gmail.com}

\date{}

\subjclass[2010]{11K06, 11K60}

\begin{abstract}
A classical theorem of Koksma states that for Lebesgue almost every $x>1$ the sequence $(x^n)_{n=1}^{\infty}$ is uniformly distributed modulo one. In the present paper we extend Koksma's theorem to the pair correlation setting. More precisely, we show that for Lebesgue almost every $x>1$ the pair correlations of the fractional parts of $(x^n)_{n=1}^{\infty}$ are asymptotically Poissonian. The proof is based on a martingale approximation method.
\end{abstract}
   
\keywords{Poissonian pair correlations, powers of real numbers, Koksma's theorem, pseudorandomness, metric number theory.}
\maketitle

\section{Introduction}
Let $(x_n)_{n=1}^{\infty}$ be a sequence of real numbers in the unit interval. We say that $(x_n)$ has Poissonian pair correlations if for every $s>0$: $$\lim_{N\to\infty}\frac{\#\{1\leq m\neq n\leq N:\|x_n-x_m\|\leq \frac{s}{N}\}}{N}=2s.$$ Here and throughout $\|\cdot\|$ denotes the distance to the nearest integer, and $\{\cdot\}$ denotes the fractional part. It can be shown that if $(y_n)$ is a sequence whose entries are chosen from $[0,1]$ independently and with respect to the uniform probability measure, then almost surely $(y_n)$ has Poissonian pair correlations. Accordingly, one can view the property of having Poissonian pair correlations as an indication of the ``random'' behaviour of a sequence modulo one. 

Part of the motivation behind studying whether a sequence has Poissonian pair correlations comes from a connection with quantum physics. The Berry-Tabor conjecture states that the discrete energy spectrum of a quantum system has Poissonian pair correlations except for in certain degenerate cases. See \cite{mark} for the background in theoretical physics. For some special quantum systems it can be shown that there exists $\alpha\in \R,$ and a sequence of positive integers $(a_n),$ such that the local distribution properties of the discrete energy spectrum of the system agree with those of the sequence $(\{a_n\alpha\})$. This connection lead to several important works of Rudnick, Sarnak, and Zaharescu \cite{RS,RSZ,RZ}. Interestingly the sequence $(n \alpha)$ does not have Poissonian pair correlations for any $\alpha\in \R$. This can be viewed as a consequence of the three gap theorem. See \cite{LS2} for details.

From a number-theoretic perspective it is an interesting and challenging problem to determine whether a sequence $(x_n)$ has Poissonian pair correlations or not. However, there exist only very few positive results in this direction. For example, it is conjectured that $(\{n^2 \alpha\})$ has Poissonian pair correlations, provided that $\alpha$ cannot be approximated very well by rationals. There are partial results in this direction (see \cite{HB,True}), but it seems that the full conjecture is wide out of reach. In contrast, the situation is relatively well-understood from a metric point of view. Let $(a_n)$ be a sequence of distinct positive integers. Then for Lebesgue almost every $\alpha$ the sequence $(\{a_n \alpha\})$ has Poissonian pair correlations, unless the so-called additive energy of the sequence $(a_n)$ is exceptionally large. See \cite{ALL} and \cite{BCGW}. 

In this paper we are interested in the distribution of fractional parts of sequences of the form $(x^n)$, for $x>1$. The study of this family of sequences dates back to work of Hardy \cite{Hardy} who proved that if $x$ is an algebraic number, then $\lim_{n\to\infty}\|x^n\|=0$ if and only if $x$ is a Pisot number. This result was obtained independently by Pisot in \cite{Pisot2}. Pisot had proved in an earlier work \cite{Pisot} that there are at most countably many $x>1$ satisfying $\lim_{n\to\infty}\|x^n\|=0$ . Determining whether there exists transcendental $x>1$ satisfying $\lim_{n\to\infty}\|x^n\|=0$ is still an important open problem. The first metric results on the distribution of the sequence $(x^n)$ were due to Koksma. In \cite{Koks} he proved that for Lebesgue almost every $x>1$ the sequence $(x^n)$ is uniformly distributed modulo one. A version of this theorem for self-similar measures was recently established by the second author in \cite{Bakss}. In \cite{Koks2}, Koksma proved that for any sequence of real numbers $(y_n)$, under suitable monotonicity conditions on the sequence $(\epsilon_n)$, the set $$\{x>1:\|x^n-y_n\|\leq \epsilon_n\, \textrm{ for infinitely many }n\in\N \}$$ has zero or full Lebesgue measure depending on whether $\sum_{n=1}^{\infty} \epsilon_n$ diverged or converged. For some more recent work on the distribution of the sequence $(x^n)$ we refer the reader to \cite{A,Bakss,Baker,Bak,BLR, BugMos,Dub,Kah}, as well as to \cite[Chapters 2 and 3]{Bug}, and the references therein.

In \cite{Baker} the second author asked the question:
	\begin{question}
		\label{question}
		Is it true that for Lebesgue almost every $x>1$ the sequence $(\{x^n\})_{n=1}^{\infty}$ has Poissonian pair correlations?
	\end{question}
The second author was unable to answer this question. Instead he proved via a second moment argument that if $(d_n)$ is a sequence of integers that satisfies a certain growth condition, then for Lebesgue almost every $x>1$ the sequence $(\{x^{d_n}\})$ has Poissonian pair correlations. For example one can take $(d_n)$ to be $(n^k)_{n=1}^{\infty}$ for any $k\geq 2$. In this paper we obtain a positive answer to the question above. In particular we prove the following theorem.

\begin{theorem}
\label{Main theorem}
For Lebesgue almost every $x>1$ the sequence $(\{x^n\})_{n=1}^{\infty}$ has Poissonian pair correlations.
\end{theorem}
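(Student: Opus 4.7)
\emph{Reduction.} The strategy is to control a Fourier expansion of the pair correlation count via a martingale decomposition on the variable $x$ that is adapted to the exponential growth of $(x^n)$, and then to conclude by Borel--Cantelli. By covering $(1,\infty)$ with countably many compact intervals it suffices to fix $[a,b]\subset(1,\infty)$ and prove the conclusion for a.e.~$x\in[a,b]$. Since the counting function
\[
R_N(x,s) \;:=\; \frac{1}{N}\,\#\!\left\{1\le m\ne n\le N:\|x^m-x^n\|\le \tfrac{s}{N}\right\}
\]
is monotone non-decreasing in $s$, it is enough to verify $R_N(x,s)\to 2s$ for all $s$ in a countable dense subset of $(0,\infty)$, and then pass to every $s>0$ by sandwiching.

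\emph{Main term via Fourier expansion.} Smoothly approximate $\mathbf 1_{[-s/N,s/N]}$ on the torus, Fourier expand it, and organise
\[
R_N(x,s) \;=\; \frac{2s(N-1)}{N} \;+\; \frac{1}{N}\sum_{k\ne 0}\widehat\chi(k)\bigl(|S_N(x,k)|^2-N\bigr) \;+\; \text{(smoothing error)},
\]
with $S_N(x,k)=\sum_{n=1}^{N} e^{2\pi i k x^n}$ and $\widehat\chi(k)$ effectively supported on $|k|\lesssim N/s$. Koksma's classical integration by parts bound
\[
\int_{a}^{b} e^{2\pi i k (x^n-x^m)}\,dx \;=\; O\!\left(\frac{1}{|k|\,|n-m|\,a^{\min(m,n)}}\right),
\]
exploiting monotonicity of $nx^{n-1}-mx^{m-1}$, then yields $\mathbb E[R_N(\cdot,s)]\to 2s(b-a)$ and bounds the pairwise correlations of individual exponential sums $S_N(\cdot,k)$.

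\emph{Martingale concentration.} To upgrade $L^2$ estimates to pointwise a.e. convergence, introduce the dyadic filtration $\mathcal F_j$ on $[a,b]$ generated by the partition into intervals of length $(b-a)2^{-j}$ and write
\[
R_N - \mathbb E[R_N] \;=\; \sum_{j\ge 1} D_j, \qquad D_j \;=\; \mathbb E[R_N\mid\mathcal F_j]-\mathbb E[R_N\mid\mathcal F_{j-1}].
\]
On an atom of $\mathcal F_j$ of length $2^{-j}$ the derivative $nx^{n-1}$ of $x^n$ is of order $2^j$ precisely when $n\approx (j\log 2)/\log x$: for $n$ much larger than this transition scale $\{x^n\}$ is essentially equidistributed on every atom of $\mathcal F_j$, and for $n$ much smaller it is essentially determined by $\mathcal F_j$. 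Hence for each fixed $k$ only pairs $(m,n)$ with one index near the transition threshold contribute non-trivially to $D_j$, and combining this localisation with the integration by parts bound above allows one to estimate $\|D_j\|_2$ and apply a martingale maximal/Bernstein inequality to produce deviation bounds for $\max_{N'\le N}|R_{N'}-\mathbb E[R_{N'}]|$.

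\emph{Conclusion and main obstacle.} With such tail bounds, a Borel--Cantelli argument along a sparse subsequence (e.g.\ $N_k=2^k$) combined with the maximal inequality between consecutive $N_k$ yields a.e. convergence along the full sequence. I expect the principal difficulty to be precisely the martingale step: the differences $D_j$ encode contributions of \emph{all} Fourier modes $|k|\lesssim N/s$ simultaneously, so one must balance the transition windows in $j$ against the range of $k$ so that $\sum_j\|D_j\|_2^2$ summed over $k$ beats the $N^{-1}$ normalisation, and one has to handle separately the ``diagonal'' pairs with small gap $r=n-m$, for which $x^m(x^r-1)$ can be small and the generic uniform-distribution heuristic underlying the Koksma integration by parts breaks down.
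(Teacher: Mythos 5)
Your outline identifies the right ingredients --- smoothing of the indicator, a second-moment bound obtained from van der Corput estimates on $\int e^{2\pi i k(x^n-x^m)}\,dx$, a martingale decomposition exploiting the separation of oscillation scales of $x^n$ in $n$, Borel--Cantelli along a sparse subsequence, and a final sandwiching over a countable dense set of $s$ --- and this is indeed the architecture of the actual proof. But the two steps you yourself flag as ``the principal difficulty'' are precisely the load-bearing ones, and they are not carried out. First, the resonant pairs: in the second moment one must bound $\int_A^{A+1} F_N(x^n-x^{m_1})F_N(x^n-x^{m_2})\,dx$ for pairs sharing the large index $n$, where the phase $k(x^n-x^{m_1})-k(x^n-x^{m_2})=k(x^{m_2}-x^{m_1})$ can oscillate slowly and van der Corput gives nothing useful. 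The paper handles this not by oscillation at all but by a geometric counting argument (a convexity lemma controlling the measure of $\{x: f(x)\in\mathbb{Z}+[-s/N,s/N]\}$ for convex increasing $f$), yielding the bound $\O(N^{-2}+m_1A^{(m_1-n)/2}/(Nn(n-m_1)))$, which after summation gives the crucial $\O(N^{2/10})$ for each diagonal block. Without some substitute for this, your $\sum_j\|D_j\|_2^2$ has no power saving over the trivial bound and Borel--Cantelli cannot start.

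Second, the martingale step as you set it up has two structural problems. A uniform dyadic filtration in $x$ does not match the oscillation of $x^n$, whose local frequency $nx^{n-1}$ varies by an exponential factor across a fixed interval $[a,b]$; the paper must build an \emph{inhomogeneous} filtration whose atoms near $x$ have length $\approx x^{-(k+1/2)N^{1/10}}$, and must additionally split the blocks by parity so that consecutive blocks in each sub-sum have oscillation frequencies separated by a factor $A^{N^{1/10}}$ --- this separation is what makes $\E(Y_{k,N}\mid\F_j)$ genuinely small for $k-j\ge 2$, and it also requires truncating the Fourier series of the smoothed kernel (a Jackson-type estimate) before van der Corput can be applied mode by mode. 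Moreover, Doob's inequality for your martingale in the spatial index $j$ gives no maximal inequality in $N$; along $N_k=2^k$ the monotonicity sandwich between consecutive $N_k$ loses a factor $2$ and does not recover the constant $2s$, so you need either a genuine maximal inequality in $N$ (which you have not provided) or, as in the paper, a subsequence such as $N=M^{20}$ with ratio of consecutive terms tending to $1$, for which monotonicity alone suffices. As written, the proposal is a correct strategic sketch of essentially the paper's method, but not a proof.
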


It is known that having Poissonian pair correlations is a stronger property than being equidistributed; see for example \cite{ALP,grep}. Thus our theorem is indeed an extension of Koksma's theorem. A simple adaptation of our proof of Theorem \ref{Main theorem} shows that for any fixed $\xi\neq 0$ the sequence $(\{\xi x^n\})_{n=1}^{\infty}$ has Poissonian pair correlations for Lebesgue almost every $x>1$. For simplicity we restrict our attention to the case $\xi=1$.\\

We give a brief heuristic for the theorem, and outline the strategy of proof. Let us first consider a so-called lacunary sequence of integers $(a_n)$, that is, a sequence of at least exponential growth. It is a classical observation that in this case the functions $(\{a_n \alpha\})$ exhibit properties which are typical for sequences of independent, identically distributed (i.i.d.) random variables. In the context of pair correlations, Rudnick and Zaharescu \cite{RZ2} proved that for a lacunary sequence $(a_n)$, the pair correlations of $(\{a_n \alpha\})$ are Poissonian for Lebesgue almost every $\alpha$. Actually, their results go far beyond pair correlations: they could show that for generic $\alpha$ also the triple correlations and all other higher correlations coincide with the Poissonian model, which implies that the distribution of the level spacings (nearest-neighbour spacings) is asymptotically exponential (exactly as the distribution of spacings in the Poisson process -- this is the source of the term ``Poissonian''). The reason for this ``pseudo-random'' behaviour of the sequence $(\{a_n \alpha\})$ is the oscillatory nature with quickly growing frequencies of these functions (which is exploited in the proof in \cite{RZ2} by reducing the problem to the counting of the number of solutions of certain Diophantine equations). In a similar way, the sequence of functions $(\{x^n\})$ exhibits oscillatory and ``pseudo-random'' behaviour; Donald Knuth mentions such sequences in his celebrated \emph{The art of computer programming} as examples of sequences exhibiting a high degree of pseudo-randomness, and stated a conjecture on their statistical properties which was solved by Niederreiter and Tichy \cite{nt}. From a technical perspective, sequences of the form $(\{x^n\})$ are significantly more difficult to handle than lacunary sequences, since they are lacking the simple ``homogeneous'' nature of lacunary sequences, which by orthogonality of the trigonometric system allows the reduction of moment estimates to a simple counting of solutions of Diophantine equations. To overcome these problems, in the present paper we set up a martingale approximation machinery to prove Theorem \ref{Main theorem}, which makes direct use of the oscillatory nature of the sequence $(\{x^n\})$. This martingale method was introduced in metric number theory for problems concerning lacunary trigonometric sequences independently by Berkes \cite{berk,berk2} and Philipp and Stout \cite{ps}. In the context of pair correlation problems (for the case of lacunary sequences), it was used by Berkes, Philipp and Tichy in \cite{bpt}. Roughly speaking, the functional principle of the method is the observation that when we have two oscillating functions, where the frequency of oscillation of the second function is much higher than that of the first, then we can closely approximate the first function by a step function such that the second function still quickly oscillates on the intervals where the step function is constant -- or, in probabilistic language, the conditional expectation of the second function is essentially independent of the sigma-field under which the discretization of the first function is measurable. This allows to approximate the whole structure by a martingale, and it is known from probability theory that in many respects martingales show the same distributional behaviour as sums of independent random variables. The martingale approximation is technically involved, which prevented us from addressing the case of triple or higher correlations. However, we believe that these correlations should also follow the Poissonian model, for Lebesgue almost every $\alpha$. 

\begin{conjecture}
For Lebesgue almost every $x>1$ the triple correlations of the sequence $(\{x^n\})_{n=1}^{\infty}$ coincide with the Poissonian model. The same is true for all higher correlations, as well as for the level spacings. 
\end{conjecture}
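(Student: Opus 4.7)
The plan is to extend the martingale approximation machinery of Theorem~\ref{Main theorem} from pair correlations to the triple (and higher) correlation setting. For each $k\geq 2$ and $s_1,\dots,s_{k-1}>0$, I would introduce the counting functions
\[
R_k^{(N)}(s_1,\dots,s_{k-1};x) = \frac{1}{N}\,\#\bigl\{(n_1,\dots,n_k)\in\{1,\dots,N\}^k\text{ distinct} : \|x^{n_i}-x^{n_{i+1}}\| \le s_i/N\ \forall\,i\bigr\},
\]
and aim to show that for Lebesgue almost every $x>1$ one has $R_k^{(N)} \to 2^{k-1} s_1\cdots s_{k-1}$ as $N\to\infty$. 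As in the pair correlation case, the backbone should be a variance estimate on $R_k^{(N)}$ combined with a Borel--Cantelli argument along a polynomial subsequence $N_\ell$, plus an interpolation/monotonicity argument to upgrade the result to all $N$.

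The next step is to expand $\E|R_k^{(N)} - \E R_k^{(N)}|^2$ (or, for triple correlations, potentially the fourth moment of the deviation) into a sum over pairs of $k$-tuples, producing a $2k$-fold sum. For each ordering of the $2k$ indices one would approximate the integrand by a martingale difference scheme with respect to a filtration $\F_j$ generated by a sufficiently fine dyadic-type partition of the $x$-interval adapted to the smallest indices. The analytic engine is the same as in the present paper: whenever the next index $n$ is significantly larger than the indices already conditioned on, the function $x\mapsto \mathbf{1}_{\{\|x^n - y\|\le s/N\}}$ has integral $2s/N + O(x^{-n})$ uniformly in $y$, so its conditional expectation essentially decouples from $\F_j$. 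This is the quantitative realisation of the ``fast oscillation beats slow oscillation'' principle highlighted in the introduction.

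The main obstacle, acknowledged by the authors, is combinatorial rather than conceptual. For pair correlations one tracks four indices $(n_1,n_2,m_1,m_2)$ and a modest number of gap configurations; for $k$-correlations one has $2k$ ordered indices, many possible orderings, several coincidence patterns, and a whole hierarchy of gap sizes between consecutive ordered indices. One has to split the sum into regimes where the largest gap is wide enough to permit martingale decoupling against diagonal regimes where indices cluster and produce the leading Poissonian contribution, and then verify that these diagonal terms reconstruct exactly $2^{k-1}s_1\cdots s_{k-1}$. The difficulty lies in keeping the accumulated error over all regimes uniformly smaller than the main term, a bookkeeping problem whose complexity grows rapidly with $k$.

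Finally, once all higher correlations are established, the level spacing part of the conjecture follows by a standard soft argument. The rescaled point process $\{N\{x^n\}\bmod N : 1\le n\le N\}$ has its factorial moments determined by the correlation functions; convergence of all factorial moments to those of a homogeneous Poisson process of intensity one gives weak convergence of the associated point process, which in turn forces the nearest-neighbour (level) spacing distribution to converge to the exponential law. Thus the level-spacing assertion in the conjecture would be automatic from the higher correlation statements, and the real work is entirely contained in pushing the martingale variance method to $k\geq 3$.
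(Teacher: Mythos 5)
The statement you are addressing is not a theorem of the paper: it is stated explicitly as an open conjecture, and the authors say that the technical weight of the martingale approximation ``prevented us from addressing the case of triple or higher correlations.'' What you have written is a research plan along the lines the authors themselves envisage, not a proof, and the gap is exactly where you locate it yourself -- in the step you describe as ``combinatorial rather than conceptual'' and defer as ``a bookkeeping problem.'' That step is the entire content of the conjecture. Concretely: for pair correlations the diagonal contribution to the variance is controlled by Lemma~\ref{Lemma2}, whose proof rests on the very specific two-variable estimate of Lemma~\ref{Integral bound2}, namely a bound on $\int_A^{A+1} F_N(x^n-x^{m_1})F_N(x^n-x^{m_2})\,dx$ obtained by intersecting the preimage of one constraint with the level sets of the other and invoking the convexity estimate (Lemma~\ref{convexity lemma}) on each piece. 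For triple correlations the analogous diagonal terms involve integrals of products of four or more factors $F_N(x^{n_i}-x^{n_j})$ with overlapping index patterns, and the chained constraints $\|x^{n_1}-x^{n_2}\|\le s_1/N$, $\|x^{n_2}-x^{n_3}\|\le s_2/N$ do not reduce to the two-variable lemma; no estimate of this kind is established in the paper or in your proposal. Your assertion that the indicator of $\{\|x^n-y\|\le s/N\}$ has integral $2s/N+\mathcal{O}(x^{-n})$ uniformly in $y$ is essentially Lemma~\ref{convexity lemma}, but the conjecture lives precisely in the question of whether the accumulated errors from chaining such estimates over all coincidence patterns and all gap regimes stay below the main term; asserting that this ``has to be verified'' is not a verification.

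A second, quieter gap: for $k\ge 3$ it is not clear that a second-moment bound of the quality of Proposition~\ref{Slow growth} is even attainable, since the number of off-diagonal pairs of $k$-tuples grows like $N^{2k}$ while each individual correlation term is only of size $N^{-(k-1)}$, so the margin between the trivial bound and the bound needed for Borel--Cantelli along the subsequence $N^{20}$ shrinks; one may need higher moments of the deviation (as you hint), which multiplies the martingale bookkeeping further. Your final paragraph, on deducing the level-spacing statement from convergence of all correlation functions via factorial moments of the rescaled point process, is a correct and standard soft argument (it is the route taken by Rudnick and Zaharescu in the lacunary case), but it only discharges the easiest part of the conjecture. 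In short: the approach is the right one and matches the authors' stated intentions, but none of the hard analysis has been carried out, so this remains a conjecture rather than a theorem.
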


Theorem \ref{Main theorem} is equivalent to the following statement: for Lebesgue almost every $x>1$, for all $s>0$ we have $$\lim_{N\to\infty}\frac{\sum_{1\leq m\neq n\leq N}\chi_{[\frac{-s}{N},\frac{s}{N}]}(x^n-x^m)}{N}=2s.$$ Here, and in the sequel, $\chi_{[\frac{-s}{N},\frac{s}{N}]}$ denotes the indicator function on the set $\mathbb{Z}+[\frac{-s}{N},\frac{s}{N}]$. Instead of studying the function $\chi_{[\frac{-s}{N},\frac{s}{N}]}$ directly it is more convenient to study a suitable smooth approximation. Most of this paper will be concerned with proving the following theorem which considers such approximations.

\begin{theorem}
	\label{Approximating thm}
	Let $s>0$ be fixed. Suppose $(F_N)_{N=1}^{\infty}$ is a sequence of differentiable functions satisfying:
	\begin{enumerate}
		\item $F_{N}(x+1)=F_{N}(x)$ for all $x\in \R$ and $N\in \N$.
		\item $F_{N}(x)=F_{N}(-x)$ for all $x\in \R$ and $N\in \N$. 
		\item 	$$\int_{0}^1 F_{N}\, dx= \frac{2s}{N}+\O\left(\frac{1}{N^2}\right).$$
		\item $0\leq F_{N}(x)\leq 1$ for all $x\in \R$ and $N\in \N$.
		\item $\|F'_N\|_{\infty}= \O(N^2).$
		\item $\text{supp}(F_N)\subseteq \mathbb{Z}+[\frac{-2s}{N},\frac{2s}{N}]$.
	\end{enumerate}
Then Lebesgue almost every $x>1$ satisfies $$\lim_{N\to\infty}\frac{\sum_{1\leq m\neq n\leq N^{20}}F_{N^{20}}(x^n-x^m)}{N^{20}}=2s.$$
\end{theorem}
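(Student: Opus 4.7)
The plan is to Fourier-expand $F_{N^{20}}$, isolate the constant Fourier term as the main contribution, and control the oscillatory remainder via a martingale approximation. Writing $F_N(y) = \sum_{k \in \z} \hat{F}_N(k) e^{2\pi i k y}$, the double sum becomes
$$\sum_{1 \le m \ne n \le N^{20}} F_{N^{20}}(x^n-x^m) = \sum_{k \in \z} \hat{F}_{N^{20}}(k) \bigl(|S_{N,k}(x)|^2 - N^{20}\bigr),$$
where $S_{N,k}(x) := \sum_{n=1}^{N^{20}} e^{2\pi i k x^n}$. The $k=0$ contribution equals $\hat{F}_{N^{20}}(0) \cdot (N^{40}-N^{20}) = 2s\,N^{20} + \O(1)$ by hypothesis (3), so after dividing by $N^{20}$ it produces the desired limit $2s$. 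Hence the whole task reduces to proving that for almost every $x>1$,
$$\frac{1}{N^{20}} \sum_{k \ne 0} \hat{F}_{N^{20}}(k) \,|S_{N^{20},k}(x)|^2 \longrightarrow 0.$$

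Hypotheses (4)--(6) together with integration by parts give the Fourier-coefficient bounds $|\hat{F}_N(k)| \ll N^{-1}$ and $|\hat{F}_N(k)| \ll N|k|^{-1}$, while Parseval yields $\sum_k |\hat{F}_N(k)|^2 \ll N^{-1}$. These let me truncate the sum at a polynomial threshold $|k| \le N^{A}$ for some large but fixed $A$, handling the tail by the trivial bound $|S_{N^{20},k}| \le N^{20}$ combined with the coefficient decay. Thus the core of the proof becomes the estimate $|S_{N^{20},k}(x)|^2 = o(N^{20})$, almost surely in $x$, uniformly in $k \in \{1,\dots,N^A\}$, with rates strong enough to feed into a Borel--Cantelli argument.

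To obtain such an estimate I would implement the martingale approximation of Berkes--Philipp--Stout, in the form used by Berkes--Philipp--Tichy \cite{bpt} for lacunary pair correlations. Fix a compact $[a,b]\subset(1,\infty)$. Partition $\{1,\dots,N^{20}\}$ into consecutive blocks $I_1,\dots,I_J$ of geometrically increasing length and set $Y_j(x) := \sum_{n \in I_j} e^{2\pi i k x^n}$. Let $\F_j$ be the $\sigma$-field generated by a partition of $[a,b]$ into subintervals whose length is small enough that each $e^{2\pi i k x^n}$ with $n \le \max I_j$ is essentially $\F_j$-measurable, but large enough that for $n \in I_{j+1}$ the same exponential still completes many full oscillations inside each atom (here one uses the phase derivative $2\pi k n x^{n-1}$). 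Rapid oscillation then forces $\E[Y_{j+1} \mid \F_j]$ to be very small, so $Y_j - \E[Y_j \mid \F_{j-1}]$ is a martingale difference sequence to which Burkholder's inequality applies, yielding fourth-moment bounds of the form $\int_a^b |S_{N^{20},k}|^{4} dx \ll N^{20(2-\delta)}$ for some $\delta>0$; Chebyshev and Borel--Cantelli then conclude.

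The main obstacle will be the careful calibration of the discretization scales defining the filtration $(\F_j)$: the partition must be coarse enough for the conditional-expectation cancellation to kick in, yet fine enough that approximating $Y_j$ by an $\F_j$-measurable step function has error summable to $o(N^{20})$ in $j$, and both requirements must hold uniformly in $k$ up to $N^A$. The non-homogeneous derivative $n x^{n-1}$ of $x \mapsto x^n$, as opposed to the constant multiplier in the lacunary setting of \cite{bpt}, is exactly what makes this calibration delicate, and the polynomial slack afforded by working along the subsequence $N^{20}$ (rather than $N$) is introduced precisely to let Borel--Cantelli absorb the accumulated losses from the martingale approximation and the Fourier truncation.
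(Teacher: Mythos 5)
Your overall architecture---smoothing, isolating the zero Fourier mode as the main term $2s$, a martingale approximation with an inhomogeneous filtration calibrated to the phase derivative $n x^{n-1}$, van der Corput bounds for the conditional expectations, and Chebyshev plus Borel--Cantelli along the subsequence $N^{20}$---is in the same spirit as the paper. But the step where you reduce everything to controlling the \emph{individual} exponential sums $S_{M,k}(x)=\sum_{n\le M}e^{2\pi i k x^n}$ uniformly in $k$ contains a fatal gap. First, an algebraic slip: the $k\neq 0$ contribution is $\sum_{k\neq 0}\hat F_M(k)\bigl(|S_{M,k}|^2-M\bigr)$, and since $\sum_{k\neq 0}\hat F_M(k)=F_M(0)-\hat F_M(0)=F_M(0)+\O(M^{-1})$ is not $o(1)$ in general (for the majorants used to deduce Theorem \ref{Main theorem} it is $1+o(1)$), the quantity you propose to send to $0$, namely $M^{-1}\sum_{k\neq 0}\hat F_M(k)|S_{M,k}|^2$, would in fact converge to $F_M(0)\neq 0$ if the theorem were true; the diagonal $-M$ cannot be dropped. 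Second, and decisively, the estimate you call the core of the proof is unprovable by your method and is essentially false: one has $\int_a^b |S_{M,k}|^2\,dx\sim (b-a)M$, so by Cauchy--Schwarz $\int_a^b|S_{M,k}|^4\,dx\ge (b-a)^{-1}\bigl(\int_a^b|S_{M,k}|^2\,dx\bigr)^2\gg M^2$, which contradicts your claimed bound $\ll M^{2(1-\delta/2)}$ outright. More generally, $|S_{M,k}(x)|^2-M$ fluctuates at scale $M$ itself (the normalised quantity $|S_{M,k}|^2/M$ behaves like an exponential random variable, not like a constant), so no moment/Chebyshev/Borel--Cantelli argument can certify $|S_{M,k}(x)|^2-M=o(M)$ for a.e.\ $x$ even for a single $k$, let alone uniformly over $k\le M^{A}$.

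The missing idea is that the required $o(M)$ saving comes from cancellation \emph{across} the frequencies $k$ and across the pairs $(m,n)$, not from each frequency separately: heuristically $\mathrm{Var}\bigl(\sum_{k}\hat F_M(k)(|S_{M,k}|^2-M)\bigr)\approx M^2\sum_k|\hat F_M(k)|^2\ll M$ by Parseval, and it is this second moment of the full pair-correlation statistic that must be estimated, not sup norms of its Fourier modes. That is what the paper does: it blocks the counting function itself into $Y_{k,N}(x)=\sum_{n\in\Delta_{k,N}}\sum_{m<n}G_N(x^n-x^m)$, proves $\int_A^{A+1}\bigl(\sum_k Y_{k,N}\bigr)^2dx=\O(N^{11/10})$ by combining the martingale/van der Corput argument for the off-diagonal covariances with a direct geometric overlap count (Lemma 2.1 of \cite{Baker}) for the diagonal terms $\int Y_{k,N}^2\,dx$, and only then applies Chebyshev and Borel--Cantelli. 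A secondary, fixable issue: your treatment of the Fourier tail $|k|>N^{A}$ by ``the trivial bound combined with the coefficient decay'' does not close, since hypotheses (1)--(6) only give $|\hat F_M(k)|\ll\min(M^{-1},M|k|^{-1})$ and the resulting tail sum diverges; one needs a Jackson-type sup-norm bound on the truncation error, as in the proof of Lemma \ref{Martingale lemma}.
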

Once Theorem \ref{Approximating thm} is established Theorem \ref{Main theorem} will follow almost immediately. 

\subsection{Structure of the paper and notational conventions.} In Section \ref{Approximating theorem proof} we prove Theorem \ref{Approximating thm}. In Section \ref{Main theorem proof} we prove Theorem \ref{Main theorem}. Throughout this paper we will use the standard big $\O$ notation. That is we say $X=\O(Y)$ if there exists $C>0$ such that $|X|\leq CY$. We will use $\L(\cdot)$ to denote the Lebesgue measure.


\section{Proof of Theorem \ref{Approximating thm}}
\label{Approximating theorem proof}
We now proceed with our proof of Theorem \ref{Approximating thm}. Throughout this section the parameter $s>0$ will be fixed and $(F_N)_{N=1}^{\infty}$ will be a sequence of functions satisfying the hypothesis of Theorem \ref{Approximating thm}. We also choose a number $A>1$ arbitrarily, and keep its value fixed throughout this section. Since $A$ and $s$ are considered to be fixed, in our proof we will suppress the dependence of the implied $\O$ constants on $A$ and $s.$ 


For each $N\in \N$ let $$G_N:=F_N-\int_{0}^1 F_{N}\, dx.$$ It will be technically more convenient to study the typical behaviour of $$\sum_{1\leq m\neq n\leq N}G_N(x^n-x^m)$$ rather than $$\sum_{1\leq m\neq n\leq N}F_N(x^n-x^m).$$ By our underlying assumptions we know that $F_{N}(x)=F_{N}(-x)$ for all $x\in \R$. Therefore  $G_{N}(x)=G_{N}(-x)$ for all $x\in \R$. This implies 
\begin{equation}
\label{n>m}
\sum_{1\leq m\neq n\leq N}G_N(x^n-x^m)=2\sum_{1\leq m< n\leq N}G_N(x^n-x^m).
\end{equation} As such, in our analysis we may always assume $m<n$.


We now partition the set $\{1,\ldots,N\}$ into blocks which describe the magnitude of the parameter $n$ on an appropriate scale. Suppose $N$ is some large number. As the conclusion of Theorem \ref{Approximating thm} indicates, we will only need to consider values of $N$ which are the 20th power of an integer. Thus we can assume throughout this section (for simplicity of writing) that $N^{1/10}$ is an integer, and that $N^{9/10}$ is an integer as well.\\

For each $1\leq k \leq N^{9/10}$ let $$\Delta_{k,N}:= \left\{(k-1) N^{1/10} +1,\ldots, k N^{1/10} \right\}.$$ Note that obviously we have 
\begin{equation*}
\#\Delta_{k,N} = N^{1/10} \qquad \text{and} \qquad \bigcup_{k=1}^{N^{9/10}} \Delta_{k,N} = \{1, \dots, N\}.
\end{equation*}For each $1\leq k \leq N^{9/10}$ let us define $$Y_{k,N}(x):=\sum_{n\in \Delta_{k,N}}\sum_{1\leq m<n}G_{N}(x^n-x^m).$$ It follows from \eqref{n>m} that
\begin{equation}
\label{parity conditioning}
\sum_{1\leq m\neq  n\leq N}G_N(x^n-x^m)=2\left(\sum_{\stackrel{1 \leq k \leq N^{9/10},}{k \textrm{ is odd}}} Y_{k,N}(x)+\sum_{\stackrel{1 \leq k \leq N^{9/10},}{k \textrm{ is even}}}  Y_{k,N}(x)\right).
\end{equation} Conditioning on the parity of $k$ within \eqref{parity conditioning} will play an important part in our proof of Theorem \ref{Approximating thm}. By adopting this approach we can obtain stronger quantitative information on the independence between the functions $Y_{k,N}$ and $Y_{j,N}$.

The key to proving Theorem \ref{Approximating thm} will be following proposition.

\begin{proposition}
	\label{Slow growth}
	Let $s>0$ and suppose $(F_N)_{N=1}^{\infty}$ is a sequence of functions satisfying the hypothesis of Theorem \ref{Approximating thm}. Then
	$$\int_{A}^{A+1}\left( \sum_{\stackrel{1 \leq k \leq N^{9/10},}{k \textrm{ is odd}}}  Y_{k,N}\right)^2\,dx=\O(N^{11/10}).$$
	The same result holds if the summation is extended over all even (rather than all odd) values of $k$ in the specified range.
\end{proposition}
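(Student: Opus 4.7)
The strategy is to expand the square and reduce each resulting integral $\int_A^{A+1} G_N(x^{n_1} - x^{m_1})\, G_N(x^{n_2} - x^{m_2})\, dx$ to a sum of oscillatory integrals via Fourier analysis, and then to apply the first derivative test. Expanding the square yields
\[
\int_A^{A+1}\left(\sum_{k\text{ odd}}Y_{k,N}\right)^{2} dx = \sum_{k_1, k_2\text{ odd}} \sum_{\substack{n_i \in \Delta_{k_i, N}\\ 1 \leq m_i < n_i}} \int_A^{A+1} G_N(x^{n_1} - x^{m_1})\, G_N(x^{n_2} - x^{m_2})\, dx.
\]
The odd-parity restriction is essential: if $k_1 \neq k_2$ are both odd, then $|k_1 - k_2| \geq 2$ and thus $|n_1 - n_2| \geq N^{1/10}$, a spacing which drives the cross-term savings.

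I next substitute the Fourier series $G_N(y) = \sum_{h \neq 0} c_h e^{2\pi i h y}$. The direct bound $|c_h| \leq \|F_N\|_1 = \O(N^{-1})$, combined with one integration by parts using $\|F_N'\|_\infty = \O(N^2)$ and the narrow support of $F_N$ from hypothesis (6), gives $|c_h| = \O(\min(N^{-1}, N|h|^{-1}))$. Parseval together with $F_N^2 \leq F_N$ yields $\sum_h |c_h|^2 = \O(N^{-1})$. Each inner integral thus becomes a double Fourier sum over oscillatory integrals
\[
K(h_1, h_2) = \int_A^{A+1} e^{2\pi i \phi(x)}\, dx, \qquad \phi(x) = h_1(x^{n_1} - x^{m_1}) + h_2(x^{n_2} - x^{m_2}),
\]
controlled by the first derivative test: $|K| \leq C/\lambda$ whenever $\phi'$ is monotone with $|\phi'| \geq \lambda$ on $[A, A+1]$. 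Since $A > 1$ is fixed, the powers $x^n$ grow exponentially on $[A, A+1]$, so $\phi'(x)$ is dominated by its highest-exponent summand. A case analysis on which of $\{n_1, m_1, n_2, m_2\}$ is maximal and on how $h_1, h_2$ combine shows $|K|$ decays exponentially in $n_{\max} = \max(n_1, n_2)$, \emph{except} in the \emph{fully-diagonal} configuration $(n_1, m_1) = (n_2, m_2)$ coupled with $h_1 + h_2 = 0$, where $\phi' \equiv 0$ and $K = 1$.

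The fully-diagonal part contributes exactly $\sum_{h \neq 0} |c_h|^2 = \O(N^{-1})$ per pair $(n, m)$; summing over the $\O(N^2)$ pairs with $m < n \leq N$ (organised into same-$k$ blocks) yields a main term of size $\O(N)$. Cross-$k$ configurations ($k_1 \neq k_2$) force $n_1 \neq n_2$, eliminating the fully-diagonal case; all those terms are off-diagonal and exponentially small in $n_{\max} \geq N^{1/10}$, contributing $\O(1)$ in total, and the same applies to same-$k$ off-diagonal terms. Combined, we obtain $\O(N)$, which is well within the desired $\O(N^{11/10})$. The main technical obstacle is the oscillatory-integral case analysis in degenerate configurations --- partial coincidences among the four exponents and Fourier cancellations like $h_1 + h_2 = 0$ (which transfer the maximum-exponent analysis onto the $m_i$'s) --- together with uniform dependence on $(h_1, h_2)$ to make the Fourier sum converge; the tail bound $|c_h| = \O(N/|h|)$ is essential for controlling large $|h|$. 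The even-$k$ case follows by the identical argument.
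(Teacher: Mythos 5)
Your plan takes a genuinely different route from the paper: you expand both factors of $G_N$ into Fourier series and try to control every resulting oscillatory integral $K(h_1,h_2)=\int_A^{A+1}e^{2\pi i\phi(x)}dx$ by the first derivative test, in the spirit of Rudnick--Zaharescu for lacunary sequences. The paper deliberately avoids the double Fourier expansion: cross terms with $|k-j|\geq 2$ are handled by a martingale approximation (replacing $Y_{k,N}$ by its conditional expectation on a dyadic filtration and applying van der Corput to a \emph{single} Fourier expansion, truncated at $|l|\leq N^6$ with a Jackson-type bound $\|R_N\|_\infty=\O(\log N/N^4)$ for the tail), while the diagonal blocks $\int Y_{k,N}^2$ are handled by a purely geometric counting argument (the convexity lemma applied to the sets where $x^n-x^m$ lands near an integer), with no Fourier analysis at all. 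Your route is not unreasonable, but as written it has genuine gaps at its central step.

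First, the claim that $|K(h_1,h_2)|$ decays exponentially in $n_{\max}$ outside the fully-diagonal configuration is false. When $n_1=n_2=n$ and $h_1+h_2=0$, the surviving phase is $h_1(x^{m_2}-x^{m_1})$, whose derivative is only of order $|h_1|m_{\max}A^{m_{\max}}$; for bounded $m_{\max}$ (e.g.\ $m_1=1$, $m_2=2$, which occurs in every block) this gives $|K|=\Theta(1)$ and no decay in $n$ whatsoever. These terms can still be summed, but only by exploiting exponential decay in $m_{\max}$ together with the $\ell^2$ bound on the coefficients --- a computation absent from your plan, and one that contradicts your stated mechanism. More seriously, ``$\phi'$ is dominated by its highest-exponent summand'' fails when the frequency ratio is extreme: for $|h_1|\asymp |h_2|A^{n_2-n_1}$ a lower-order term competes with $h_2n_2x^{n_2-1}$, the phase can be stationary inside $[A,A+1]$, and the first derivative test gives nothing. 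One is saved there only because the corresponding Fourier coefficient is tiny, so the whole argument requires a two-parameter balancing of $(h_1,h_2)$ against the exponents that you have not carried out. Second, the summability of the double Fourier sum is not established: $\sum_h\min(N^{-1},N/|h|)$ diverges, so you may never fall back on $|K|\leq 1$ over an infinite set of frequency pairs, and even the near-diagonal case $(n_1,m_1)=(n_2,m_2)$, $h_1+h_2=r\neq 0$ gives $\sum_{r\neq 0}\frac{1}{|r|}\cdot\O(N^{-1})$, which diverges without a sharper splitting; this is exactly why the paper truncates and proves the Jackson-type tail estimate. Third, tuples in which all exponents are bounded defeat every oscillation estimate and need a separate trivial bound (the paper's $N_0$ cutoff). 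Finally, your claimed total of $\O(N)$ is stronger than the paper's $\O(N^{11/10})$, and the discrepancy sits precisely in the same-block terms where your argument is weakest; the paper only obtains $\O(N^{2/10})$ per block there, via the triangle inequality in $L^2$ over the $N^{1/10}$ values of $n$, and that is what produces the exponent $11/10$.
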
 Expanding the bracket within Proposition \ref{Slow growth} we obtain
\begin{align}
\label{Opened bracket}
&\int_{A}^{A+1}\left(\sum_{\stackrel{1 \leq k \leq N^{9/10},}{k \textrm{ is odd}}}  Y_{k,N}\right)^2\,dx\nonumber\\
=&\int_{A}^{A+1}2\sum_{\stackrel{1\leq j<k\leq N^{9/10},}{j,k \textrm{ are odd}}}Y_{j,N}\cdot Y_{k,N}\, dx +\int_{A}^{A+1} \sum_{\stackrel{1 \leq k \leq N^{9/10},}{k \textrm{ is odd}}} Y_{k,N}^2\, dx.
\end{align}A similar equation holds in the even case. Proposition \ref{Slow growth} will be implied by the following two lemmas. 
\begin{lemma}
	\label{Lemma1}
Suppose $k-j\geq 2$. Then $$\int_{A}^{A+1}Y_{j,N}\cdot Y_{k,N}\,dx=\O\left(\frac{\log N}{N^{18/10}}\right).$$
\end{lemma}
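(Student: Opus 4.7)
The plan is to exploit the spectral gap $n_2 - n_1 \geq N^{1/10}$, which holds whenever $n_1 \in \Delta_{j,N}$ and $n_2 \in \Delta_{k,N}$ with $k-j \geq 2$. This gap forces $x^{n_2}$ to oscillate vastly faster than $x^{n_1}$ on $[A,A+1]$, so integrals of the products $G_N(x^{n_1} - x^{m_1})\,G_N(x^{n_2} - x^{m_2})$ should be small.

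First I would expand $G_N$ as a Fourier series, $G_N(y) = \sum_{\ell \neq 0} \hat{G}_N(\ell)\, e^{2\pi i \ell y}$. The trivial bound $|\hat{G}_N(\ell)| \leq \int_0^1 |F_N|\, dy = O(1/N)$ together with a single integration by parts (using $\|F_N'\|_\infty = O(N^2)$ on a support of measure $O(1/N)$) gives $|\hat{G}_N(\ell)| \leq C\min\{1/N,\, N/|\ell|\}$; consequently $\sum_{\ell \neq 0} |\hat{G}_N(\ell)| = O(N)$ and $\sum_{\ell \neq 0} |\hat{G}_N(\ell)|/|\ell| = O(\log N / N)$. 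Substituting the series,
$$\int_A^{A+1} Y_{j,N}\, Y_{k,N}\, dx = \sum_{n_1,m_1,n_2,m_2}\ \sum_{\ell_1,\ell_2 \neq 0} \hat{G}_N(\ell_1)\,\hat{G}_N(\ell_2)\, I(\ell_1,\ell_2),$$
where $I(\ell_1,\ell_2) = \int_A^{A+1} e^{2\pi i \psi(x)}\, dx$ with $\psi(x) = \ell_1(x^{n_1} - x^{m_1}) + \ell_2(x^{n_2} - x^{m_2})$.

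The central step is bounding $I(\ell_1,\ell_2)$ via van der Corput estimates. Writing $Q_j(x) = n_j x^{n_j-1} - m_j x^{m_j-1}$, on $[A,A+1]$ (with $A > 1$ fixed) each $Q_j(x) \asymp n_j x^{n_j-1}$, and --- crucially, at a single point $x$ --- the ratio $Q_2(x)/Q_1(x) \asymp (n_2/n_1)\,x^{n_2-n_1} \geq A^{N^{1/10}}$. So, outside a narrow ``transition range'' of $(\ell_1,\ell_2)$ where $|\ell_1|/|\ell_2|$ is comparable to $A^{n_2-n_1}$, the term $\ell_2 Q_2(x)$ dominates $\psi'(x)$ uniformly in $x$, and $\psi'$ is monotonic because $\psi''$ is dominated by $\ell_2 Q_2'$; the first-derivative test then yields $|I(\ell_1,\ell_2)| \leq C/(|\ell_2|\, n_2\, A^{n_2-1})$. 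In the transition range, $\psi'$ has a stationary point $x_0$ at which $|\psi''(x_0)| \gtrsim |\ell_2|\, n_2\, A^{n_2-2}\,(n_2 - n_1)$, and the combination of the second-derivative test with the very small Fourier coefficient $|\hat{G}_N(\ell_1)| \leq CN/|\ell_1|$ for the required $|\ell_1| \gtrsim A^{N^{1/10}}$ makes this contribution negligible.

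Summing over $\ell_1, \ell_2$ using the harmonic estimates above produces a bound $O(\log N/(n_2\, A^{n_2-1}))$ per tuple $(n_1,m_1,n_2,m_2)$, the logarithm arising from $\sum_{\ell} |\hat{G}_N(\ell)|/|\ell|$. Summing over tuples, $\sum_{n_1 \in \Delta_{j,N}} \sum_{m_1 < n_1} 1 \leq N^{11/10}$ while $\sum_{n_2 \in \Delta_{k,N}} 1/A^{n_2-1} \leq C\, A^{-(k-1)N^{1/10}}$, so the total is in fact exponentially small in $N^{1/10}$, easily dominated by $O(\log N/N^{18/10})$. The main technical obstacle is precisely the pointwise derivative comparison above: any attempt to bound $(A+1)^{n_1-1}$ against $A^{n_2-1}$ across the interval breaks down because $((A+1)/A)^N$ overwhelms $A^{N^{1/10}}$ for large $N$, so one must compare $Q_1$ and $Q_2$ at each $x \in [A,A+1]$ separately, together with the delicate case analysis for the transition range of $(\ell_1,\ell_2)$.
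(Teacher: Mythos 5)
Your strategy is viable and genuinely different from the paper's. The paper never expands both factors of $Y_{j,N}\cdot Y_{k,N}$ into Fourier series: instead it builds a filtration $(\mathcal{F}_k)$ of dyadic-length intervals adapted to the oscillation scale of $x^{(k+1/2)N^{1/10}}$, replaces $Y_{k,N}$ by the conditional expectation $Z_{k,N}=\mathbb{E}(Y_{k,N}|\mathcal{F}_k)$ (a step function that is exponentially close in sup norm), and reduces Lemma \ref{Lemma1} to showing $\mathbb{E}(Y_{k,N}|\mathcal{F}_j)=\mathcal{O}(\log N/N^{29/10})$; that in turn is proved by applying van der Corput, on each atom of $\mathcal{F}_j$, to the Fourier series of $G_N$ truncated at $|l|\leq N^6$, with a Jackson-type estimate $\|R_N\|_\infty=\mathcal{O}(\log N/N^4)$ for the tail. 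You trade the $\sigma$-algebra machinery for a direct two-frequency oscillatory-integral analysis on all of $[A,A+1]$; the engine (van der Corput plus the gap $n_2-n_1\geq N^{1/10}$ supplied by the parity splitting) is the same, and your insistence on comparing $Q_1$ and $Q_2$ pointwise rather than via $(A+1)^{n_1}$ versus $A^{n_2}$ is exactly the issue the paper's inhomogeneous atoms are designed to handle.

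Two steps need repair. First, ``$|\hat G_N(\ell)|\leq C\min\{1/N,\,N/|\ell|\}$, consequently $\sum_{\ell\neq 0}|\hat G_N(\ell)|=\mathcal{O}(N)$'' is not a valid deduction: $\sum_{|\ell|>N^2}N/|\ell|$ diverges, so the stated pointwise bound gives no control of the full $\ell_1$-sum. You must either truncate the series and bound the remainder in sup norm (the paper's route), or use Bessel and Cauchy--Schwarz, $\sum_{\ell\neq 0}|\hat G_N(\ell)|\leq\bigl(\sum_{\ell\neq0}\ell^{-2}\bigr)^{1/2}\|G_N'\|_{L^2}/(2\pi)=\mathcal{O}(N^{3/2})$; the weaker exponent is harmless since your final bound has enormous slack. (Your other harmonic sum, $\sum_{\ell\neq0}|\hat G_N(\ell)|/|\ell|=\mathcal{O}(\log N/N)$, is correct.) Second, in the transition range the second-derivative test requires a lower bound on $|\psi''|$ over the whole interval of integration, not just at the stationary point $x_0$, and $\psi''=\ell_1Q_1'+\ell_2Q_2'$ can itself change sign precisely in that range of $|\ell_1|/|\ell_2|$; as written this case is incomplete. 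The cleanest fix is to drop stationary phase there altogether: the transition range forces $|\ell_1|\gtrsim A^{n_2-n_1}|\ell_2|\geq A^{N^{1/10}}$, so the trivial bound $|I(\ell_1,\ell_2)|\leq 1$ combined with $\sum_{|\ell_1|\geq L}|\hat G_N(\ell_1)|=\mathcal{O}(N^{3/2}L^{-1/2})$ already makes this contribution super-exponentially small. With these repairs your argument closes, and in fact yields a much stronger bound than $\mathcal{O}(\log N/N^{18/10})$; the paper's polynomial loss is an artifact of its sup-norm estimate for the Fourier tail.
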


\begin{lemma}
	\label{Lemma2}For all $1\leq k \leq N^{9/10}$ we have
	$$\int_{A}^{A+1}Y_{k,N}^2\, dx=\O\left(N^{2/10}\right).$$
\end{lemma}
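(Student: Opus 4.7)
The plan is to reduce Lemma \ref{Lemma2} via Cauchy--Schwarz to an $L^2$ bound on individual row sums and then establish this row bound using a Fourier expansion of $G_N$ together with van der Corput type oscillatory integral estimates.

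Writing $Y_{k,N} = \sum_{n \in \Delta_{k,N}} Z_n$ with $Z_n(x) := \sum_{1\leq m<n} G_N(x^n - x^m)$, Cauchy--Schwarz gives
\[
\int_A^{A+1} Y_{k,N}^2\,dx \;\leq\; |\Delta_{k,N}| \sum_{n \in \Delta_{k,N}} \int_A^{A+1} Z_n^2\,dx \;=\; N^{1/10} \sum_{n \in \Delta_{k,N}} \int_A^{A+1} Z_n^2\,dx.
\]
Since every $n \in \Delta_{k,N}$ satisfies $n \leq k N^{1/10} \leq N$, it suffices to prove the uniform row estimate $\int_A^{A+1} Z_n^2\,dx = \O(1)$, from which the target $\O(N^{2/10})$ immediately follows.

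To bound $\int Z_n^2$, I would expand the square into a double sum over $m_1, m_2 < n$ and Fourier-expand each factor as $G_N(y) = \sum_{h \neq 0} \hat G_N(h) e^{2\pi i h y}$. Hypotheses (3)--(6) yield two Fourier bounds: $|\hat G_N(h)| \leq \int_0^1 |F_N|\,dy = \O(1/N)$, and one integration by parts using $\|F'_N\|_\infty = \O(N^2)$ together with $|\mathrm{supp}(F_N)\cap [0,1]| = \O(1/N)$ gives $|\hat G_N(h)| = \O(N/|h|)$. Each cross-integral becomes $\hat G_N(h_1)\hat G_N(h_2) \int_A^{A+1} e^{2\pi i \Phi(x)}\,dx$ with phase
\[
\Phi(x) = (h_1+h_2)x^n - h_1 x^{m_1} - h_2 x^{m_2}.
\]
The analysis splits into three cases. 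In the full resonance $h_1+h_2=0,\,m_1=m_2$, $\Phi \equiv 0$ and Parseval gives $\sum_h |\hat G_N(h)|^2 = \O(1/N)$ per pair, summing to $\O(n/N) = \O(1)$; this is the dominant term. The partial resonance $h_1+h_2=0,\,m_1\neq m_2$ has phase $h_1(x^{m_2}-x^{m_1})$ with derivative of size $\gtrsim |h_1|\max(m_1,m_2) A^{\max(m_1,m_2)-1}$ on $[A,A+1]$, so van der Corput combined with $\sum_h |\hat G_N(h)|^2/|h| = \O(\log N/N^2)$ leaves a negligible $\O(\log N/N^2)$. In the off-resonance case $h_1+h_2\neq 0$, the leading $x^n$ term of $\Phi'$ forces rapid oscillation and yields a bound like $\O(1/(|h_1+h_2| n A^{n-1}))$ per integral.

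The main technical obstacle is the off-resonance case. When $|h_1+h_2|$ is small compared to $|h_1|,|h_2|$, the lower-order terms in
\[
\Phi'(x) = n(h_1+h_2) x^{n-1} - h_1 m_1 x^{m_1-1} - h_2 m_2 x^{m_2-1}
\]
can rival the leading term on parts of $[A, A+1]$, so $\Phi'$ may vanish inside the interval. I would handle this by partitioning $[A,A+1]$ into at most three subintervals on which $\Phi'$ is monotonic of constant sign (valid since $\Phi'$ is a three-term generalized polynomial with at most two sign changes by Descartes' rule) and applying van der Corput on each piece. A parallel point of care is the Fourier summation $\sum_{h_1+h_2\neq 0} |\hat G_N(h_1)|\,|\hat G_N(h_2)|/|h_1+h_2|$, for which the uniform bound $|\hat G_N(h)| = \O(1/N)$ alone does not suffice; one must split dyadically according to whether $|h_i| \lessgtr N^2$ and combine both Fourier estimates to obtain the $\O(\log N)$ bound needed to make this case negligible.
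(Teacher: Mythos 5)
Your reduction to the uniform row bound $\int_A^{A+1}\bigl(\sum_{m<n}G_N(x^n-x^m)\bigr)^2dx=\O(1)$ is sound and matches the paper's structure (the paper uses the triangle inequality in $L^2$ rather than Cauchy--Schwarz, but the bookkeeping is equivalent), and your full-resonance and partial-resonance computations are correct, with the full-resonance term $\O(n/N)=\O(1)$ correctly identified as dominant. However, your route to the row bound is genuinely different from the paper's, and the off-resonance case contains a real gap. The first-derivative van der Corput lemma requires a lower bound $\gamma$ on $|\Phi'|$ over the whole interval of integration; splitting $[A,A+1]$ into the (boundedly many) subintervals on which $\Phi'$ is monotonic and of constant sign does not produce such a bound, because on the pieces adjacent to a zero of $\Phi'$ the infimum of $|\Phi'|$ is $0$ and the lemma gives nothing. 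These zeros genuinely occur: taking $h_1+h_2=1$ with $h_1$ large and $m_1>m_2$, the phase derivative $nx^{n-1}-h_1m_1x^{m_1-1}-h_2m_2x^{m_2-1}$ has a stationary point in $[A,A+1]$ whenever $h_1\approx nx^{n-m_1}/m_1$ for some $x$ in the interval, which happens for many values of $h_1$ once $n-m_1=\O(\log N)$. To handle these terms you would need the second-derivative (stationary phase) test together with a count of the frequency pairs admitting stationary points; the second derivative is indeed large at such points (of order $n(n-m_1)A^{n-2}$), so the approach is likely salvageable, but this is a substantive missing step, not a routine one. A secondary point: since $F_N$ is only assumed differentiable with $\|F_N'\|_\infty=\O(N^2)$, your two coefficient bounds give $\sum_h|\hat G_N(h)|$ a divergent tail beyond $|h|>N^2$, so the term-by-term expansion of the double Fourier series inside the integral needs a separate justification (e.g.\ Bernstein's theorem for Lipschitz functions, or a truncation at $|h|\leq N^6$ with a Jackson-type sup-norm bound on the remainder, as the paper does in the proof of Lemma \ref{Martingale lemma}).

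For comparison, the paper avoids oscillatory integrals entirely in this lemma. It majorizes $F_N$ by the indicator $\chi_{[\frac{-2s}{N},\frac{2s}{N}]}$ and observes that the cross term $F_N(x^n-x^{m_1})F_N(x^n-x^{m_2})$ can only be nonzero when $x^{m_1}-x^{m_2}$ lies in $\mathbb{Z}+[\frac{-4s}{N},\frac{4s}{N}]$; it then partitions that set into intervals $I_M$ and applies a convexity/counting lemma (Lemma \ref{convexity lemma}) on each, yielding the bound $\O\bigl(\frac{1}{N^2}+\frac{m_1A^{(m_1-n)/2}}{Nn(n-m_1)}\bigr)$ directly from measure estimates. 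This sidesteps exactly the resonance analysis that causes trouble in your approach, at the cost of being specific to nonnegative, compactly supported majorants rather than general oscillating $G_N$.
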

The first summation on the right hand side of \eqref{Opened bracket} consists of $\O(N^{18/10})$ terms, and the second summation consists of $\O(N^{9/10})$ terms. Therefore, applying the bounds provided by Lemma \ref{Lemma1} to the first summation, and the bounds provided by Lemma \ref{Lemma2} to the second summation, we may conclude that Proposition \ref{Slow growth} holds in the odd case. The even case follows by similar reasoning. Therefore to prove Proposition \ref{Slow growth} it suffices to prove Lemma \ref{Lemma1} and Lemma \ref{Lemma2}. This we do in the next two sections.

\subsection{Proof of Lemma \ref{Lemma1}}
To prove Lemma \ref{Lemma1} we will introduce a collection of step-functions $Z_{k,N}$ that provide good approximations to $Y_{k,N}$. For this collection of functions it will be easier to prove that for $k-j\geq 2$ the integral $\int_{A}^{A+1} Z_{j,N}\cdot Z_{k,N}\, dx$ is small. Before we can define $Z_{k,N}$ we need to introduce the following $\sigma$-algebra.

Given $k\in \N$ and $x\in [A,A+1),$ we define the quantity $\mu_k(x)$ to be the unique integer solution to the inequalities: 
\begin{equation} \label{*-ex} 
2^{\mu_k(x)}\leq x^{(k+1/2) N^{1/10}}<2^{\mu_k(x)+1}.
\end{equation}
Now let us fix $k\in\N$. Let $z_{0,k}=A$ and define $z_{1,k}=A+2^{-\mu_k(A)}.$ Suppose $z_{0,k},\ldots, z_{j,k}$ have been constructed and $z_{j,k}<A+1$, we define $z_{j+1,k}$ to be $z_{j+1,k}=z_{j,k}+2^{-\mu_k(z_{j,k})}.$ We stop at $N_k$ when $z_{N_k,k}=A+1$. Such an $N_k$ must exist because $\mu_k(x)$ is bounded from above, increasing with $x,$ and because at each step in our construction we are adding a negative power of $2$. We define $\mathcal{F}_k$ to be the $\sigma$-algebra generated by the intervals $$\{[z_{i,k},z_{i+1,k}):0\leq i <N_k\}.$$ Note that $\mu_k(x)\leq \mu_{k+1}(x)$ for all $x\in [A,A+1)$. Using this property, and the fact that each $\F_k$ is generated by a collection of intervals whose length is some negative power of $2$, we can conclude that $\F_j$ is a sub $\sigma$-algebra of $\F_k$ for all $j<k$. 

For each $1\leq k\leq N^{9/10}$ we define $Z_{k,N}:[A,A+1) \to\mathbb{R}$ as follows:
$$Z_{k,N}(x)= \frac{1}{\L([z_{i,k},z_{i,k+1}))}\int_{z_{i,k}}^{z_{i,k+1}}Y_{k,N}\,dx \quad \mbox{if } x\in [z_{i,k},z_{i,k+1})\mbox{ for some } 0\leq i<N_k.$$ We remark that in probabilistic language $Z_{k,N}$ is the conditional expectation of $Y_{k,N}$ under $\mathcal{F}_k$. The increasing system of $\sigma$-algebras $(\mathcal{F}_k)_k$ forms a so-called \emph{filtration}, which is the basis for constructing a martingale.\\

The following lemma describes how good an approximation $Z_{k,N}$ is to $Y_{k,N}$.

\begin{lemma}
	\label{Approximation lemma}
	For each $1\leq k\leq N^{9/10} $ we have $$\|Y_{k,N}-Z_{k,N}\|_{\infty} =\O\left(\frac{N^{41/10}}{A^{N^{1/10}/2}}\right).$$
\end{lemma}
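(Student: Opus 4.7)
The plan is to bound $\|Y_{k,N} - Z_{k,N}\|_\infty$ by a direct modulus-of-continuity argument. By construction $Z_{k,N}$ equals the mean of $Y_{k,N}$ on each partition interval $[z_{i,k}, z_{i+1,k})$, so for any $x$ lying in such an interval one has
\begin{equation*}
|Y_{k,N}(x) - Z_{k,N}(x)| \leq \sup_{y \in [z_{i,k}, z_{i+1,k}]} |Y_{k,N}(y) - Y_{k,N}(x)| \leq \|Y'_{k,N}\|_{L^{\infty}([z_{i,k}, z_{i+1,k}])} \cdot (z_{i+1,k} - z_{i,k}).
\end{equation*}
It then suffices to control the two factors separately, and verify that their exponential parts cancel in just the right way.

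For the interval width, the defining inequality \eqref{*-ex} immediately gives
\begin{equation*}
z_{i+1,k} - z_{i,k} = 2^{-\mu_k(z_{i,k})} \leq 2\, z_{i,k}^{-(k+1/2)N^{1/10}}.
\end{equation*}
For the derivative, I would differentiate $Y_{k,N}$ termwise and use the trivial estimate $|n y^{n-1} - m y^{m-1}| \leq 2n y^{n-1}$ (valid for $m<n$ and $y\geq 1$) together with $\|G'_N\|_\infty = \|F'_N\|_\infty = \O(N^2)$. Since $n \leq kN^{1/10}$ for $n \in \Delta_{k,N}$, $|\Delta_{k,N}| = N^{1/10}$, and $k \leq N^{9/10}$, this produces
\begin{equation*}
|Y'_{k,N}(y)| \leq \|G'_N\|_\infty \sum_{n \in \Delta_{k,N}} 2 n^2 y^{n-1} = \O(N^2)\cdot N^{1/10}\cdot 2(kN^{1/10})^2\cdot y^{kN^{1/10}-1} = \O(N^{41/10})\cdot y^{kN^{1/10}-1}.
\end{equation*}

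Multiplying the two bounds, and noting that any $y \in [z_{i,k}, z_{i+1,k}]$ differs from $z_{i,k}$ by at most $2 A^{-(3/2)N^{1/10}}$, so that $y^{n-1}$ is within a uniform constant factor of $z_{i,k}^{n-1}$ for every $n \leq N$, the exponents telescope:
\begin{equation*}
|Y_{k,N}(x) - Z_{k,N}(x)| \leq \O(N^{41/10}) \cdot z_{i,k}^{kN^{1/10}-1} \cdot z_{i,k}^{-(k+1/2)N^{1/10}} = \O(N^{41/10})\cdot z_{i,k}^{-1-N^{1/10}/2} = \O\!\left(\frac{N^{41/10}}{A^{N^{1/10}/2}}\right),
\end{equation*}
since $z_{i,k} \geq A$. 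The calculation itself is routine; its success rests entirely on the precise choice of $\mu_k$ in the construction of $\mathcal{F}_k$, which tunes the partition width $x^{-(k+1/2)N^{1/10}}$ so as to beat the worst-case derivative growth $x^{kN^{1/10}-1}$ by a factor $x^{-N^{1/10}/2}$, producing the super-polynomial decay in the statement. The only technical wrinkle is the uniform comparability of $y^{n-1}$ and $z_{i,k}^{n-1}$ across the partition interval, which follows from $n(z_{i+1,k}-z_{i,k})/z_{i,k} = o(1)$; this is the one place where the hypothesis $k \geq 1$ (and hence the $3N^{1/10}/2$ rather than $N^{1/10}/2$ in the exponent of $A$) is genuinely used.
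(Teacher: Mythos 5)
Your proposal is correct and follows essentially the same route as the paper: bound the deviation of $Y_{k,N}$ from its interval average by the interval width times $\sup|Y'_{k,N}|$, use the definition of $\mu_k$ to get the width bound $2\,z_{i,k}^{-(k+1/2)N^{1/10}}$, the termwise derivative bound $\O(N^{41/10})y^{kN^{1/10}-1}$, and the uniform comparability of $y^{n}$ with $z_{i,k}^{n}$ across the atom, so that the exponents cancel to leave $z_{i,k}^{-N^{1/10}/2}\leq A^{-N^{1/10}/2}$. The only cosmetic difference is that the paper first picks $y$ with $Z_{k,N}(x)=Y_{k,N}(y)$ and applies the mean value theorem, whereas you bound directly by the oscillation; the substance is identical.
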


Before we turn to the proof of the lemma, let us quickly reflect on what we are doing here. We have split the sum $\sum_{m<n} G(x^n - x^m)$ into block sums $Y_{k,N}$, according to the size of the larger index $n$. We can think of the functions $G(x^n-x^m)$ as being quickly oscillating, with the oscillation frequency growing exponentially as a function of $n$. We defined a $\sigma$-field $\mathcal{F}_k$, and replaced $Y_{k,N}$ by the conditional expectation of $Y_{k,N}$ under $\mathcal{F}_k$. Note that this conditional expectation is a step function, which is $\mathcal{F}_k$-measurable. Using the function $\mu_k$ from \eqref{*-ex} we constructed $\mathcal{F}_k$ in an ``inhomogeneous'' way, in the sense that its atoms become finer and finer when moving from $A$ towards $A+1$, which reflects the fact that the ``oscillation frequency'' of $G(x^n - x^m)$ is not everywhere the same, but also increases when $x$ moves from $A$ to $A+1$. We have constructed $\mathcal{F}_k$ in such a way that $Z_{k,N}=\mathbb{E}(Y_{k,N}|\mathcal{F}_k)$ is a good approximation to $Y_{k,N}$.\footnote{We write $\mathbb{E}$ for the expectation (integral) with respect to $x$, on the interval $[A,A+1)$ equipped with Borel sets and Lebesgue measure.} However, since $Z_{k,N}$ is constant on the atoms of $\mathcal{F}_k$, we can use the fact that the functions in the next block $Z_{k+2,N}$ oscillate with much higher frequency than those in $Z_{k,N}$, so that they still oscillate quickly on the atoms of $\mathcal{F}_k$. Note that indeed the next block is $Z_{k+2,N}$ and not $Z_{k+1,N}$, since we currently only consider odd values of $k$ -- this is why we split the whole sum into even and odd parts, to separate the oscillation frequencies in different blocks. So from our construction we essentially have $\mathbb{E} (Z_{k+2,N} | \mathcal{F}_k) \approx 0$, and a similar result for any other (odd) index $j>k+2$ instead of $k+2$ (see Lemma \ref{Martingale lemma} below). Since by construction $Z_{k,N}$ is $\mathcal{F}_k$-measurable, we thus have $\mathbb{E}(Z_{k,N} Z_{j,N}) = \mathbb{E} \left( \mathbb{E} (Z_{k,N} Z_{j,N} | \mathcal{F}_k) \right) = \mathbb{E} \left( Z_{k,N} \mathbb{E} (Z_{j,N} | \mathcal{F}_k) \right) \approx 0.$ After changing back from $Z$ to $Y$, this is essentially Lemma \ref{Lemma1}. The proof of Lemma \ref{Lemma2} is obtained in a very similar way.

\begin{proof}[Proof of Lemma \ref{Approximation lemma}]
	Let $x\in[A,A+1)$. Suppose $0\leq i<N_k$ is such that $x\in [z_{i,k},z_{i+1,k}).$ Then it is a consequence of $Y_{k,N}$ being a continuous function that there exists $y\in[z_{i,k},z_{i+1,k}]$ such that $Z_{k,N}(x)=Y_{k,N}(y)$.  This observation will allow us to use the mean value theorem to bound $|Y_{k,N}(x)-Z_{k,N}(x)|$. To prove our lemma we will also require the following estimates:\\
	\renewcommand{\labelenumi}{\alph{enumi})} \begin{enumerate}
		\item For each $1\leq k\leq N^{9/10}$ and $x\in [A,A+1)$ we have 
		\begin{equation}
		\label{Chain rule}
		|Y_{k,N}'(x)|=\O\left( N^{41/10}\cdot x^{k N^{1/10}}\right).
		\end{equation}This bound follows by applying the chain rule to each term in the summation for $Y_{k,N}$ together with the assumption $\|F_N'\|_{\infty}= \O(N^2).$\\
		
		\item For each $1\leq k\leq N^{9/10}$ we have \begin{equation}
		\label{constant}
		\frac{k N^{1/10}}{z_{i,k}2^{\mu_k(z_{i,k})}}=\O(1).
		\end{equation}This follows from the definition of the function $\mu_{k}$.\\
		
		\item For all $x\in \mathbb{R}$ and $r>0$ we have
		\begin{equation}
		\label{well known}
		(1+x)^r\leq e^{rx}.
		\end{equation}
		~\\
	\end{enumerate}
	
	We now apply these bounds together with the mean value theorem:
	\begin{align*}
	|Y_{k,N}(x)-Z_{k,N}(x)|&=|Y_{k,N}(x)-Y_{k,N}(y)|\\
	&\leq |x-y|\sup_{w\in [z_{i,k},z_{i+1,k}]}|Y_{k,N}'(w)|\\
	&\stackrel{\eqref{Chain rule}}{=}\O\left( 2^{-\mu_k(z_{i,k})}N^{41/10}(z_{i,k}+ 2^{-\mu_k(z_{i,k})})^{k N^{1/10} }\right)\\
	&=\O\left(  2^{-\mu_k(z_{i,k})}N^{41/10}z_{i,k}^{k N^{1/10}}\left(1+\frac{1}{z_{i,k}2^{\mu_k(z_{i,k})}}\right)^{k N^{1/10}}\right)\\
	&\stackrel{\eqref{well known}}{=}\O\left( 2^{-\mu_k(z_{i,k})}N^{41/10}z_{i,k}^{k N^{1/10}}\exp\left(\frac{k N^{1/10}}{z_{i,k}2^{\mu_k(z_{i,k})}}\right) \right)\\
	&\stackrel{\eqref{constant}}{=} \O(2^{-\mu_k(z_{i,k})}N^{41/10}z_{i,k}^{k N^{1/10}})\\
	&=\O\left(\frac{z_{i,k}^{k N^{1/10}}N^{41/10}}{z_{i,k}^{(k+1/2)  N^{1/10}}}\right)\\
	&=\O\left(\frac{N^{41/10}}{z_{i,k}^{ N^{1/10}/2}}\right)\\
	&=\O\left(\frac{N^{41/10}}{A^{N^{1/10}/2}}\right).
	\end{align*}
In the final line we used that $A\leq z_{i,k}$ for all $0\leq i< N_k$ and $1\leq k\leq N^{9/10}$. Since $x$ was arbitrary our result follows. 
	
\end{proof}
\begin{lemma}
	\label{Martingale lemma}
	Let $j,k\in \N$ be such that $k-j\geq2$ and let $0\leq i<N_j.$ Then $$\frac{1}{\L([z_{i,j},z_{i+1,j}))}\int_{z_{i,j}}^{z_{i+1,j}}Y_{k,N}\, dx=\O\left(\frac{\log N}{N^{29/10}}\right).$$
Thus we have
$$\mathbb{E}(Z_{k,N}|\F_j)(x) = \mathbb{E}(Y_{k,N}|\F_j)(x) =\O\left(\frac{\log N}{N^{29/10}}\right),$$
uniformly for all $x \in [A,A+1)$.
\end{lemma}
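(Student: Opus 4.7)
My plan is to reduce the averaged integral of $Y_{k,N}$ over the atom $[z_{i,j},z_{i+1,j})$ to a sum of individual integrals $\int_{z_{i,j}}^{z_{i+1,j}} G_N(x^n-x^m)\,dx$ over pairs $(n,m)$ with $n\in\Delta_{k,N}$, $1\le m<n$, and to bound each of these by a change of variables followed by one integration by parts. On $[A,A+1]$ the phase $\phi(x):=x^n-x^m$ is smooth and strictly increasing, with $\phi'(x)=nx^{n-1}-mx^{m-1}\ge(n-m)x^{n-1}$ and $\phi''\ge 0$, so $\phi$ possesses a smooth monotone inverse on its image and $\psi(u):=1/\phi'(\phi^{-1}(u))$ is a positive, decreasing function on $I:=\phi([z_{i,j},z_{i+1,j}])$ with $\|\psi\|_\infty\le 1/((n-m)z_{i,j}^{n-1})$.

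After the substitution $u=\phi(x)$ the integral becomes $\int_I G_N(u)\psi(u)\,du$. Hypothesis (3) together with $0\le F_N\le 1$ yields $\int_0^1|G_N|=\O(1/N)$, and since $G_N$ has mean zero on a period its indefinite integral $H_N(u):=\int_0^u G_N(t)\,dt$ is $1$-periodic with $\|H_N\|_\infty=\O(1/N)$. Integrating by parts and using that monotonicity of $\psi$ gives $\int_I|\psi'(u)|\,du\le\|\psi\|_\infty$, I obtain
\begin{equation*}
\left|\int_{z_{i,j}}^{z_{i+1,j}}G_N(x^n-x^m)\,dx\right|=\O(\|H_N\|_\infty\|\psi\|_\infty)=\O\!\left(\frac{1}{N(n-m)z_{i,j}^{n-1}}\right).
\end{equation*}

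Dividing by the atom length $\L([z_{i,j},z_{i+1,j}))=2^{-\mu_j(z_{i,j})}$ and invoking $2^{\mu_j(z_{i,j})}\le z_{i,j}^{(j+1/2)N^{1/10}}$ from \eqref{*-ex} bounds the contribution of a single pair by $\O(z_{i,j}^{(j+1/2)N^{1/10}-n+1}/(N(n-m)))$. The gap hypothesis $k-j\ge 2$ now pays off: every $n\in\Delta_{k,N}$ satisfies $n\ge(k-1)N^{1/10}+1\ge(j+1)N^{1/10}+1$, forcing the exponent $(j+1/2)N^{1/10}-n+1\le -N^{1/10}/2$, and hence $z_{i,j}^{(j+1/2)N^{1/10}-n+1}\le A^{-N^{1/10}/2}$. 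Summing $\sum_{m<n}1/(n-m)=\O(\log N)$ and then over the $N^{1/10}$ values of $n\in\Delta_{k,N}$ gives an average for $Y_{k,N}$ of order $\O(N^{-9/10}(\log N)A^{-N^{1/10}/2})$, which is superpolynomially smaller than the stated $\O(\log N/N^{29/10})$.

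For the final assertion, the tower property applied to $\F_j\subseteq\F_k$ yields $\E(Z_{k,N}|\F_j)=\E(\E(Y_{k,N}|\F_k)|\F_j)=\E(Y_{k,N}|\F_j)$, and on every atom $[z_{i,j},z_{i+1,j})$ this conditional expectation coincides with the atomic average just bounded, so the uniform estimate on $[A,A+1)$ is immediate. The main obstacle is the bookkeeping needed to verify the convexity and monotonicity facts that justify the clean integration-by-parts inequality, together with tracking exponents carefully enough that the gap $k-j\ge 2$ actually produces the decisive $A^{-N^{1/10}/2}$ savings; once this is done the lemma follows with considerable room to spare.
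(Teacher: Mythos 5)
Your argument is correct, and it reaches the conclusion by a genuinely different route from the paper. The paper expands $G_N$ into its Fourier series, truncates at frequency $N^6$, applies the van der Corput first-derivative test to each exponential $e^{2\pi i l(x^n-x^m)}$ (picking up a factor $\sum_{|l|\le N^6} l^{-1}=\O(\log N)$), and controls the tail $R_N$ by a Jackson-type estimate that crucially uses the hypothesis $\|F_N'\|_\infty=\O(N^2)$; this yields only $\O(\log N/N^4)$ per pair $(m,n)$ and hence $\O(\log N/N^{29/10})$ after summing over the $\O(N^{11/10})$ pairs. You instead substitute $u=\phi(x)$ with $\phi(x)=x^n-x^m$ and integrate by parts once against the $1$-periodic antiderivative $H_N$, using only $\int_0^1|G_N|=\O(1/N)$ together with the positivity, monotonicity and lower bound of $\phi'$ (all of which check out: $\phi'\ge(n-m)z_{i,j}^{n-1}$ and $\phi''>0$ on $[A,A+1]$, so $\psi=1/(\phi'\circ\phi^{-1})$ is positive and decreasing and $\int_I|\psi'|\le\|\psi\|_\infty$). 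This is in effect the van der Corput/second-mean-value argument applied directly to the periodic function $G_N$ rather than mode by mode, and it buys you two things: the lemma no longer needs the derivative bound on $F_N$ at this point, and the resulting estimate $\O\bigl(N^{-9/10}(\log N)A^{-N^{1/10}/2}\bigr)$ is superpolynomially smaller than the stated $\O(\log N/N^{29/10})$, so the lemma holds with large margin. The exponent bookkeeping using $k-j\ge 2$ and \eqref{*-ex} matches the paper's, and the reduction of the conditional-expectation statement via the tower property for $\F_j\subseteq\F_k$ is exactly as in the paper.
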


To prove Lemma \ref{Martingale lemma}, we will use the van der Corput lemma (see for example \cite[p.\ 15]{kn}). 


\begin{lemma}[van der Corput lemma]
	\label{van der Corput}
	Let $\phi:[a,b]\to\mathbb{R}$ be differentiable. Assume that $\phi'(x)\geq \gamma$ for all $x\in [a,b]$, and $\phi'$ is monotonic on $[a,b]$. Then $$\left|\int_{a}^be^{2\pi i\phi(x)}\, dx\right|\leq \gamma^{-1}.$$
\end{lemma}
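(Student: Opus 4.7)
The proof I have in mind is the standard one for van der Corput's first derivative test: a single integration by parts that exchanges the oscillating factor $e^{2\pi i\phi(x)}$ for a harmless boundary term plus an integral against a signed measure of small total variation. The key algebraic identity is that $e^{2\pi i\phi(x)}$ equals $(2\pi i\phi'(x))^{-1}$ times the derivative of $e^{2\pi i\phi(x)}$, which lets us rewrite the integrand as an exact differential up to a controllable error.

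Concretely, since $\phi'\ge\gamma>0$ and $\phi'$ is monotone on $[a,b]$, the reciprocal $1/\phi'$ is also monotone and therefore of bounded variation. I would write
$$\int_a^b e^{2\pi i\phi(x)}\,dx=\int_a^b\frac{1}{\phi'(x)}\,d\!\left(\frac{e^{2\pi i\phi(x)}}{2\pi i}\right),$$
and apply the Riemann--Stieltjes integration-by-parts formula to obtain
$$\int_a^b e^{2\pi i\phi(x)}\,dx=\left[\frac{e^{2\pi i\phi(x)}}{2\pi i\,\phi'(x)}\right]_a^b-\int_a^b\frac{e^{2\pi i\phi(x)}}{2\pi i}\,d\!\left(\frac{1}{\phi'(x)}\right).$$
Equivalently, Lebesgue's theorem on a.e.\ differentiability of monotone functions makes $\phi''$ exist a.e.\ and lie in $L^1$, so one can also derive the same identity by ordinary integration by parts.

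It then remains to bound the two pieces. The boundary term is handled by the triangle inequality and $\phi'\ge\gamma$, giving an upper bound of $(2\pi)^{-1}(\phi'(a)^{-1}+\phi'(b)^{-1})\le(\pi\gamma)^{-1}$. For the integral, monotonicity of $1/\phi'$ forces $d(1/\phi')$ to have constant sign, so its total variation on $[a,b]$ is exactly $|\phi'(b)^{-1}-\phi'(a)^{-1}|$; since both endpoint values lie in $(0,\gamma^{-1}]$, this quantity is at most $\gamma^{-1}$, and pulling $|e^{2\pi i\phi(x)}|=1$ out of the Stieltjes integral yields a bound of $(2\pi\gamma)^{-1}$. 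Summing gives $3/(2\pi\gamma)$, comfortably smaller than $\gamma^{-1}$.

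The main (and essentially only) technical subtlety is justifying the integration by parts under the weak hypothesis that $\phi$ is merely once differentiable with monotone derivative, rather than $C^2$; this is dispatched either via the Riemann--Stieltjes framework used above or via the a.e.\ differentiability of monotone functions, and does not affect the final constant. Everything else is a direct two-term estimate, and the stated bound $\gamma^{-1}$ follows with room to spare.
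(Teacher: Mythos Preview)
Your argument is correct and is the standard proof of van der Corput's first derivative test. Note, however, that the paper does not actually prove this lemma: it simply states the result and refers the reader to Kuipers--Niederreiter, \emph{Uniform distribution of sequences}, p.~15. So there is no ``paper's own proof'' to compare against; you have supplied one where the authors chose to cite instead.

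For what it is worth, the textbook argument in Kuipers--Niederreiter proceeds slightly differently in presentation: one substitutes $u=\phi(x)$ to obtain $\int_{\phi(a)}^{\phi(b)} e^{2\pi i u}\, g(u)\,du$ with $g(u)=1/\phi'(\phi^{-1}(u))$ monotone, and then invokes the second mean value theorem for integrals (applied to real and imaginary parts) rather than Riemann--Stieltjes integration by parts. The two arguments are essentially equivalent and yield the same constant; your integration-by-parts route has the minor advantage of making the constant $3/(2\pi)$ explicit, while the second mean value theorem version is marginally cleaner about the regularity hypotheses. Either way, the bound $\gamma^{-1}$ holds with room to spare.
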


\begin{proof}[Proof of Lemma \ref{Martingale lemma}]

We begin by focusing on the first part of our lemma.

Since $G_N$ is a differentiable function satisfying $G_{N}(x+1)=G_{N}(x)$ for all $x\in\mathbb{R},$ we know that it equals its Fourier series, i.e., $$G_{N}(x)=\sum_{l\in\mathbb{Z}}c_{l,N}e^{2\pi ilx}.$$ Where $c_{0,N}=0$ because $\int_{0}^{1} G_N\, dx=0.$ We let $$P_{N}:=\sum_{|l|\leq N^6}c_{l,N}e^{2\pi ilx}$$ and $$R_{N}:=G_N-P_N.$$ By assumption we have  $\int_{0}^1|G_N|\,dx=\O(1)$, which directly implies that
\begin{equation}
\label{Fourier1}
|c_{l,N}|=\O(1),\, l\neq 0.
\end{equation}
We also have the bound 
\begin{equation}
\label{Fourier2}
\|R_{N}\|_{\infty}=\O\left(\frac{\log N}{N^4}\right).
\end{equation}  
The estimate \eqref{Fourier2} is a variant of Jackson's inequality from approximation theory; we have not found a good reference except in Jackson's old book, so for the convenience of the reader we give a derivation of this estimate at the end of the present proof.\\

By an application of the triangle inequality we obtain
\begin{align}
\label{Triangle}
\left|\int_{z_{i,j}}^{z_{i+1,j}} G_{N}(x^n-x^m)\, dx\right|&\leq \sum_{|l|\leq N^6}\left|\int_{z_{i,j}}^{z_{i+1,j}} c_{l,N}e^{2\pi il(x^n-x^m)}\, dx\right|\\ &+\left|\int_{z_{i,j}}^{z_{i+1,j}}R_{N}(x^n-x^m)\, dx\right|.\nonumber
\end{align} Applying \eqref{Fourier2} to the second term on the right hand side of \eqref{Triangle} we have 
\begin{equation}
\label{Bound1}
\left|\int_{z_{i,j}}^{z_{i+1,j}}R_{N}(x^n-x^m)\, dx\right|=\O\left(\frac{\L([z_{i,j},z_{i+1,j}])\log N}{N^4}\right).
\end{equation}

We now apply the van der Corput lemma to the first term on the right hand side of \eqref{Triangle}. Let $\phi_l(x)=l(x^n-x^m)$. For any $x\in [z_{i,j},z_{i+1,j}]$ we have
\begin{eqnarray*}
\phi_{l}'(x) & = & l (n x^{n-1} - m x^{m-1}) \\
& \geq & l n (x^{n-1} - x^{m-1}) \\
& \geq & lnz_{i,j}^{n-1}(1-z_{i,j}^{-1}).
\end{eqnarray*}
One can also easily check that $\phi''_l(x) = l (n(n-1) x^{n-2} - m (m-1) x^{m-2}) > 0$  for all $x\in [z_{i,j},z_{i+1,j}]$, since all real solutions $x$ to the equality $x^{n-m} = m (m-1) / (n(n-1))$ are obviously smaller than 1 (provided that $n \geq 2$). Therefore $\phi_l'$ is monotonic. Applying Lemma \ref{van der Corput} together with \eqref{Fourier1} we see that $$\left|\int_{z_{i,j}}^{z_{i+1,j}} c_{l,N}e^{2\pi il(x^n-x^m)}\, dx\right|=\O\left(\frac{1}{lnz_{i,j}^{n-1}}\right).$$ Which implies the following bound for the first term on the right hand side of \eqref{Triangle}:
\begin{equation}
\label{Bound2}\sum_{|l|\leq N^6}\left|\int_{z_{i,j}}^{z_{i+1,j}} c_{l,N}e^{2\pi il(x^n-x^m)}\, dx\right|=\O\left(\frac{\log N}{nz_{i,j}^{n-1}}\right).
\end{equation}
Using \eqref{Bound1} and \eqref{Bound2} together with the definition of $\mu_j(z_{i,j}),$ the fact $k-j\geq 2,$ and $n\in \Delta_{k,N},$ we obtain:
\begin{align}
\label{Conditional expectation bound}
\left|\frac{1}{\L([z_{i,j},z_{i+1,j}))}\int_{z_{i,j}}^{z_{i+1,j}} G_{N}(x^n-x^m)\, dx\right|&= \O\left(\frac{ z_{i,j}^{(j+1/2) N^{1/10}}\log N}{nz_{i,j}^{n-1}}+\frac{\log N}{N^4}\right)\nonumber\\
&=\O\left(\frac{\log N}{nz_{i,j}^{ N^{1/10}/2}}+\frac{\log N}{N^4}\right)\nonumber\\
&=\O\left(\frac{\log N}{N^4}\right).
\end{align}In the last line we used that $z_{i,j}^{- N^{1/10}/2}\leq A^{- N^{1/10}/2},$ and this upper bound decays to zero faster than any negative power of $N$. Applying \eqref{Conditional expectation bound} to each term in the summation for $Y_{k,N},$ together with the fact that this summation consists of $\O(N^{11/10})$ terms, we obtain: 
$$\frac{1}{\L([z_{i,j},z_{i+1,j}))}\int_{z_{i,j}}^{z_{i+1,j}}Y_{k,N}\, dx=\O\left(\frac{\log N}{N^{29/10}}\right).$$

For the second assertion of the lemma, we just note that by construction $\F_j$ is a sub $\sigma$-algebra of $\F_k$, and consequently $\E(Z_{k,N}|\F_j)=\E(Y_{k,N}|\F_j)$.\\

We conclude the proof of Lemma \ref{Martingale lemma} by verifying \eqref{Fourier2}. Let 
$$
D(x) = \frac{\sin (2 \pi (N^6+ 1/2)x)}{\sin (\pi x)}
$$
be the Dirichlet kernel of order $N^6$. We have 
\begin{eqnarray*}
R_N (x) & = & G_N(x) - \int_0^1 G_N(y) D(x-y)~dy \\
& = & G_N (x) - \int_0^1 (G_N(x) + (G_N(x-y) - G_N(x))) D(y)~dy \\
& = & -\int_0^1 (G_N(x-y) - G_N(x)) D(y)~dy,
\end{eqnarray*}
since $\int D ~dx= 1$. By periodicity, we consequently have
\begin{eqnarray}  \label{split}
|R_N(x)| & \leq & \left| \int_{-1/N^6}^{1/N^6} (G_N(x-y) - G_N(x)) D(y)~dy \right| \label{split1} \\
& & + \left| \int_{1/N^6}^{1-1/N^6} (G_N(x-y) - G_N(x)) D(y)~dy \right|. \label{split2}
\end{eqnarray}
By assumption the derivative of $G_N$ is uniformly bounded by $\mathcal{O}(N^2)$, and thus 
\begin{equation} \label{quot}
|(G_N(x-y) - G_N(x)) D(y)| \leq \left|\frac{G_N(x-y) - G_N(x)}{\sin \pi y} \right| = \mathcal{O} (N^2).
\end{equation}
Accordingly, we have
$$
\left| \int_{-1/N^6}^{1/N^6} (G_N(x-y) - G_N(x)) D(y)~dy \right|= \O\left( \frac{1}{N^4}\right),
$$
which gives the desired bound for the integral in line \eqref{split}. To estimate the second integral, we use integration by parts and obtain
\begin{eqnarray*}
& & \left| \int_{1/N^6}^{1-1/N^6} (G_N(x-y) - G_N(x)) D(y)~dy \right| \label{plugin}\\ 
& = &  \frac{1}{2 \pi (N^6 + 1/2)} \left| \int_{1/N^6}^{1-1/N^6} \left(\frac{G_N(x-y) - G_N(x)}{\sin \pi y}\right)' \cos (2 \pi (N^6+1/2)y)~dy \right| + \O\left(\frac{1}{N^4}\right), \nonumber
\end{eqnarray*}
where we estimated the contribution coming from the boundary terms using \eqref{quot}. Now we have
\begin{eqnarray*}
\left| \left(\frac{G_N(x-y) - G_N(x)}{\sin \pi y}\right)' \right| & \leq & \left| \frac{G_N'(x-y)}{\sin \pi y} \right| + \pi \left| \frac{G_N(x-y) - G_N(x)}{(\sin \pi y)^2} \right| \\
& = & \O\left(\frac{N^2}{\sin \pi y} + \frac{N^2 \min(y,1-y)}{(\sin \pi y)^2} \right)\\
& = & \O\left(\frac{N^2}{\sin \pi y}\right).
\end{eqnarray*}
Thus we obtain
\begin{align*}
\left| \int_{1/N^6}^{1-1/N^6} (G_N(x-y) - G_N(x)) D(y)~dy \right| &=\O\left(  \frac{1}{N^4} \int_{1/N^6}^{1-1/N^6} \frac{1}{\sin \pi y} ~dy\right)+\O\left(\frac{1}{N^4}\right)\\
&=\O\left( \frac{\log N}{N^4}\right).
\end{align*}
This gives the desired bound for the integral on the right-hand side of line \eqref{split2}, and thus establishes \eqref{Fourier2} as desired.
\end{proof}

With Lemma \ref{Approximation lemma} and Lemma \ref{Martingale lemma} we can now prove Lemma \ref{Lemma1}. Before giving our proof we recall some well known properties of conditional expectation.

Let $(X,\mathcal{B},\mu)$ be a probability space.
\begin{itemize}
	\item Suppose  $F,G:X\to\mathbb{R}$ are random variables and $\mathcal{H}$ is a sub $\sigma$-algebra of $\mathcal{B}$. If $F$ is $\mathcal{H}$-measurable then $\E(F\cdot G|\mathcal{H})=F\cdot\E(G|\mathcal{H})$.
	\item Suppose $F:X\to\mathbb{R}$ is a random variable and $\mathcal{H}$ is a sub $\sigma$-algebra of $\mathcal{B}$. Then $$\int_{X}|\E(F|\mathcal{H})|\, d\mu\leq \int_{X}|F|\,d\mu.$$
\end{itemize}

\begin{proof}[Proof of Lemma \ref{Lemma1}]
Let $k,j\in\N$ be such that $k-j\geq 2.$ By the triangle inequality: 
\begin{eqnarray}
& & \left|\int_{A}^{A+1}Y_{j,N}\cdot Y_{k,N}\, dx\right|  \nonumber \\ 
& = & \left|\int_{A}^{A+1}Y_{j,N}\cdot Y_{k,N}- Z_{j,N}\cdot Z_{k,N}\, dx\right|+\left|\int_{A}^{A+1}Z_{j,N}\cdot Z_{k,N}\, dx\right|.  \label{Triangle2}
\end{eqnarray} 

Using the properties of conditional expectation mentioned above, together with Lemma \ref{Martingale lemma}, we see that the following holds:
\begin{align}
\label{Firstbound}
\left|\int_{A}^{A+1}Z_{j,N}\cdot Z_{k,N}\, dx\right| &=\left|\int_{A}^{A+1}\E(Z_{j,N}\cdot Z_{k,N}|\F_j)\, dx\right| \nonumber\\
&=\left|\int_{A}^{A+1}Z_{j,N}\cdot \E(Z_{k,N}|\F_j)\, dx\right|\nonumber\\
&\leq \int_{A}^{A+1}\left|Z_{j,N}\cdot \E(Z_{k,N}|\F_j)\right|\, dx\nonumber\\
&=\O\left(\frac{\log N}{N^{29/10}}\int_{A}^{A+1}|Z_{j,N}|\, dx\right)\nonumber\\
&=\O\left(\frac{\log N}{N^{29/10}}\int_{A}^{A+1}|Y_{j,N}|\, dx\right)\nonumber\\
&=\O\left(\frac{\log N}{N^{18/10}}\right).
\end{align}In the final line we used that $\|Y_{j,N}\|_{\infty}=\O(N^{11/10}).$

Using Lemma \ref{Approximation lemma} and the fact $\|Y_{k,N}\|_{\infty}=\O(N^{11/10}),$ we similarly obtain:
\begin{align}
&\left|\int_{A}^{A+1}Y_{j,N}\cdot Y_{k,N}- Z_{j,N}\cdot Z_{k,N}\, dx\right|\nonumber\\
=&\left|\int_{A}^{A+1}Y_{j,N}\cdot Y_{k,N}- (Y_{j,N}+(Z_{j,N}-Y_{j,N}))\cdot (Y_{k,N}+(Z_{k,N}-Y_{k,N}))\, dx\right|\nonumber\\
=&\left|\int_{A}^{A+1}Y_{k,N}(Z_{j,N}-Y_{j,N})+Y_{j,N}(Z_{k,N}-Y_{k,N}) + (Z_{j,N}-Y_{j,N})(Z_{k,N}-Y_{k,N}) \, dx\right|\nonumber\\
\leq& \int_{A}^{A+1}\left|Y_{k,N}(Z_{j,N}-Y_{j,N})\right|+\left|Y_{j,N}(Z_{k,N}-Y_{k,N}) \right| + \left| (Z_{j,N}-Y_{j,N})(Z_{k,N}-Y_{k,N})\right|\, dx\nonumber\\
=&\O\left(\frac{N^{52/10}}{A^{ N^{1/10} /2}}\right). \nonumber
\end{align}Substituting this bound as well as \eqref{Firstbound} into \eqref{Triangle2}, and using the fact that $A^{- N^{1/10} /2}$ decays to zero faster than any negative power of $N,$ we obtain the desired result.
\end{proof}

\subsection{Proof of Lemma \ref{Lemma2}}
We start our proof by choosing $N_0$ sufficiently large such that for all $m_1\geq N_0,$ if $n>m_1$ and $m_2<m_1$ then 
\begin{equation}
(\lfloor A^{m_1}- A^{m_2}\rfloor-2)^{1-n/m_1}\leq A^{(m_1-n)/2}.
\end{equation}

Applying the triangle inequality for the $L^2$ norm twice we obtain: 
\begin{align}
\label{Triangle L2}
\left(\int_{A}^{A+1}Y_{k,N}^2\, dx\right)^{1/2}&=\left(\int_{A}^{A+1}\left(\sum_{n\in \Delta_{k,N}}\sum_{1\leq m<n}G_N(x^n-x^m)\right)^2\, dx\right)^{1/2}\nonumber\\
&\leq \sum_{n\in \Delta_{k,N}}\left(\int_{A}^{A+1}\left(\sum_{1\leq m<n}G_N(x^n-x^m)\right)^2\,dx\right)^{1/2}\nonumber\\
&\leq \sum_{n\in \Delta_{k,N}}\left(\int_{A}^{A+1}\left(\sum_{\stackrel{1\leq m<n}{m< N_0}}G_N(x^n-x^m)\right)^2\,dx\right)^{1/2}\nonumber\\
&+\sum_{n\in \Delta_{k,N}}\left(\int_{A}^{A+1}\left(\sum_{\stackrel{1\leq m<n}{m\geq N_0}}G_N(x^n-x^m)\right)^2\,dx\right)^{1/2}\nonumber\\
&= \O(N^{1/10})+\sum_{n\in \Delta_{k,N}}\left(\int_{A}^{A+1}\left(\sum_{\stackrel{1\leq m<n}{m\geq N_0}}G_N(x^n-x^m)\right)^2\,dx\right)^{1/2}.
\end{align}In the final line we used that $\|G_{N}\|_{\infty}=\O(1)$ and $\#\Delta_{k,N} = N^{1/10}$. To complete our proof of Lemma \ref{Lemma2} we need to obtain good bounds for $$\sum_{n\in \Delta_{k,N}}\left(\int_{A}^{A+1}\left(\sum_{\stackrel{1\leq m<n}{m\geq N_0}}G_N(x^n-x^m)\right)^2\,dx\right)^{1/2}.$$ Expanding the bracket within the integral we obtain 
$$\sum_{n\in \Delta_{k,N}}\left(\int_{A}^{A+1}\sum_{N_0\leq m_1,m_2<n}G_{N}(x^n-x^{m_1})G_{N}(x^n-x^{m_2})\,dx\right)^{1/2}.$$

Recall that $G_N=F_N-\int_{0}^1 F_N\, dx$. Using this equation, together with the assumptions $0\leq F_N\leq 1$ and $\textrm{supp}(F_N)\subset \mathbb{Z}+[\frac{-2s}{N},\frac{2s}{N}]$, we see that for any $N_0\leq m_1,m_2<n$ we have
\begin{align}
\label{Integral substitution}
&\int_{A}^{A+1}G_{N}(x^n-x^{m_1})G_{N}(x^n-x^{m_2})\,dx \nonumber\\
=&\int_{A}^{A+1} F_{N}(x^n-x^{m_1})F_{N}(x^n-x^{m_2})\,dx-\int_{0}^1 F_N\,dx\int_{A}^{A+1}F_{N}(x^n-x^{m_1})\,dx \nonumber\\
&-\int_{0}^1 F_N\,dx\int_{A}^{A+1}F_{N}(x^n-x^{m_2})\,dx+\left(\int_{0}^1 F_N\,dx\right)^2\nonumber\\
 \leq & \int_{A}^{A+1} F_{N}(x^n-x^{m_1})F_{N}(x^n-x^{m_2})\,dx + \mathcal{O} \left(\frac{1} {N^2} \right). 
\end{align}

To estimate the integral in \eqref{Integral substitution}, we will make regular use of the following lemma from \cite{Baker}, which we have rewritten slightly to suit our purposes. 

\begin{lemma}{\cite[Lemma 2.1.]{Baker}}
	\label{convexity lemma}
	Let $f:[a,b]\to\mathbb{R}$ be a strictly increasing  differentiable convex function and $s>0$. Then for $N$ sufficiently large  $$\L\left(\alpha\in [a,b]:f(\alpha)\in \mathbb{Z}+\left[\frac{-s}{N},\frac{s}{N}\right]\right)\leq \frac{4s(b-a)}{N}+\O\left(\frac{4s}{Nf'(a)}\right).$$
\end{lemma}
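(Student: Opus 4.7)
The plan is to exploit the strict monotonicity of $f$ to decompose the set in question as a disjoint union of preimage intervals indexed by integers, bound the length of each such interval using the mean value theorem, and then telescope the resulting sum using the fact that convexity of $f$ forces $f'$ to be nondecreasing.

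First, since $f$ is strictly increasing and continuous, for each integer $k$ with $(k-s/N,\, k+s/N) \cap (f(a),\, f(b)) \neq \emptyset$ the set $f^{-1}([k-s/N,\, k+s/N]) \cap [a,b]$ is a closed interval $[\alpha_k^-, \alpha_k^+]$, and for $N$ sufficiently large (so that $2s/N < 1$) these intervals are pairwise disjoint. I would separate off the at most two ``boundary'' indices where $\alpha_k^- = a$ or $\alpha_k^+ = b$, handling their total contribution by a direct MVT estimate that yields $\O(s/(Nf'(a)))$, using convexity to replace $1/f'$ at any point by $1/f'(a)$. For each remaining ``interior'' index $k$ one has $f(\alpha_k^-) = k - s/N$ and $f(\alpha_k^+) = k + s/N$ exactly.

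For an interior interval, the mean value theorem gives $\alpha_k^+ - \alpha_k^- = (2s/N)/f'(\xi_k)$ for some $\xi_k \in (\alpha_k^-, \alpha_k^+)$, and convexity (which makes $f'$ nondecreasing) yields $f'(\xi_k) \geq f'(\alpha_k^-)$, whence $\alpha_k^+ - \alpha_k^- \leq (2s/N)/f'(\alpha_k^-)$. To sum these contributions, I would apply the mean value theorem again to consecutive left endpoints: since $f(\alpha_{k+1}^-) - f(\alpha_k^-) = 1$, there is $\eta_k \in (\alpha_k^-, \alpha_{k+1}^-)$ with $\alpha_{k+1}^- - \alpha_k^- = 1/f'(\eta_k)$, and convexity gives $f'(\eta_k) \leq f'(\alpha_{k+1}^-)$, so $1/f'(\alpha_{k+1}^-) \leq \alpha_{k+1}^- - \alpha_k^-$. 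Telescoping this inequality over the interior indices yields $\sum_{k > k_0} 1/f'(\alpha_k^-) \leq b - a$, where $k_0$ is the smallest interior index; adding back the leading term $1/f'(\alpha_{k_0}^-) \leq 1/f'(a)$ and multiplying by $2s/N$ produces the bound $\sum_k (\alpha_k^+ - \alpha_k^-) \leq 2s(b-a)/N + 2s/(Nf'(a))$.

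Combined with the boundary contribution, this comfortably fits inside the claimed bound $4s(b-a)/N + \O(4s/(Nf'(a)))$; the loose factor of $4s$ in place of $2s$ is precisely what lets one absorb the boundary terms without exact accounting. The only delicate point, and the main obstacle, is the careful bookkeeping at the endpoints $a$ and $b$, where the preimage intervals can be truncated and the telescoping sum picks up an extra leftover term at the left end; everything else is mechanical, resting only on the mean value theorem together with the monotonicity of $f'$ supplied by convexity.
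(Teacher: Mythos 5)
The paper does not actually prove this lemma --- it is imported verbatim as \cite[Lemma 2.1]{Baker} --- so there is no in-paper argument to compare against; your proof is correct and is essentially the standard argument used in the cited source. Namely: split the set into the preimage intervals of $[k-\frac{s}{N},k+\frac{s}{N}]$ (disjoint once $2s/N<1$), bound each interior length by $\frac{2s}{Nf'(\alpha_k^-)}$ via the mean value theorem and the monotonicity of $f'$, control $\sum_k 1/f'(\alpha_k^-)$ by telescoping $\alpha_{k+1}^--\alpha_k^-\geq 1/f'(\alpha_{k+1}^-)$ with the leftover term bounded by $1/f'(a)$, and absorb the at most two truncated boundary intervals into the $\O\!\left(\frac{s}{Nf'(a)}\right)$ error, which comfortably yields the stated (deliberately loose) bound.
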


\begin{lemma}
	\label{Integral bound}
	Let $n>m$, then
$$\int_{A}^{A+1}F_{N}(x^n-x^{m})^2 \,dx=\O\left(\frac{1}{N}\right)$$
\end{lemma}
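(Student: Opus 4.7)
My plan is to reduce the estimate to a measure bound for the preimage of a narrow neighbourhood of the integers under the map $x \mapsto x^n - x^m$, and then apply the convexity lemma cited from \cite{Baker} (Lemma \ref{convexity lemma}).

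The key observation is that assumptions (4) and (6) on $F_N$ give $0 \le F_N \le 1$ and $\mathrm{supp}(F_N) \subseteq \mathbb{Z} + [-2s/N, 2s/N]$. Consequently
$$F_N(t)^2 \;\le\; F_N(t) \;\le\; \chi_{\mathbb{Z} + [-2s/N,\,2s/N]}(t) \qquad \text{for all } t \in \mathbb{R}.$$
Substituting $t = x^n - x^m$ and integrating over $[A, A+1]$ reduces the claim to showing
$$\L\Bigl(\bigl\{x \in [A, A+1]\;:\; x^n - x^m \in \mathbb{Z} + [-2s/N,\,2s/N]\bigr\}\Bigr) \;=\; \O\bigl(1/N\bigr).$$

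To apply Lemma \ref{convexity lemma} to $f(x) := x^n - x^m$ on $[A, A+1]$, I verify its hypotheses. Clearly $f$ is differentiable. For monotonicity, $f'(x) = nx^{n-1} - mx^{m-1} = x^{m-1}(nx^{n-m} - m)$, and since $n > m \ge 1$ and $x \ge A > 1$ we have $nx^{n-m} \ge n A^{n-m} > n > m$, so $f'(x) > 0$. For convexity, $f''(x) = n(n-1)x^{n-2} - m(m-1)x^{m-2}$; when $m = 1$ the second term vanishes and $f'' \ge 0$, while for $m \ge 2$ we have $n(n-1) > m(m-1)$ together with $x^{n-m} > 1$, which again yields $f'' \ge 0$. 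Thus $f$ is strictly increasing, differentiable, and convex on $[A, A+1]$.

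Applying Lemma \ref{convexity lemma} (with $2s$ in place of $s$, and $[a,b] = [A, A+1]$) for $N$ sufficiently large gives
$$\L\bigl(\{x \in [A,A+1] : f(x) \in \mathbb{Z} + [-2s/N, 2s/N]\}\bigr) \;\le\; \frac{8s}{N} + \O\!\left(\frac{1}{N f'(A)}\right).$$
Since $f'(A) = nA^{n-1} - mA^{m-1} \ge 1$ (in fact it grows without bound in $n$), the error term is also $\O(1/N)$. Combining with the pointwise bound $F_N^2 \le \chi_{\mathbb{Z}+[-2s/N, 2s/N]}$ yields the desired estimate $\int_A^{A+1} F_N(x^n - x^m)^2 \, dx = \O(1/N)$.

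There is no substantive obstacle here: once one notices that $F_N^2$ is controlled pointwise by the indicator of its support, the statement follows directly from the convexity lemma, with only a routine verification of monotonicity and convexity of $x^n - x^m$ on an interval bounded away from $1$.
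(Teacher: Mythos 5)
Your proposal is correct and follows essentially the same route as the paper: bound $F_N^2$ pointwise by the indicator of $\mathbb{Z}+[-2s/N,2s/N]$ and then invoke Lemma \ref{convexity lemma} for $f(x)=x^n-x^m$. The paper's proof is identical in substance, merely omitting the routine verification of monotonicity and convexity that you spell out.
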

\begin{proof}
Recall that $0\leq F_N\leq 1$ and $\textrm{supp}(F_N)\subset \mathbb{Z}+[\frac{-2s}{N},\frac{2s}{N}].$ Therefore $F_N^2$ can be bounded above by the function $\chi_{[\frac{-2s}{N},\frac{2s}{N}]}$. Therefore to prove our statement it suffices to show that $$\int_{A}^{A+1}\chi_{[\frac{-2s}{N},\frac{2s}{N}]}(x^n-x^{m}) \, dx =\O\left(\frac{1}{N}\right).$$
But this follows immediately from Lemma \ref{convexity lemma}.
\end{proof}
The proof of the following lemma uses ideas from \cite{Baker}.
\begin{lemma}
	\label{Integral bound2}
	Let $n>m_1>m_2$ and $m_1\geq N_0,$ then $$\int_{A}^{A+1}F_{N}(x^n-x^{m_1})F_{N}(x^n-x^{m_2}) \,dx=\O\left(\frac{1}{N^2}+\frac{m_1A^{(m_1-n)/2}}{Nn(n-m_1)}\right)$$
\end{lemma}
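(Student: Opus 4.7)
The plan is to reduce the integral to the measure of a set defined by two near-integer conditions, and then apply Lemma \ref{convexity lemma} twice. First, since $0\leq F_N\leq 1$ and $\text{supp}(F_N)\subseteq\mathbb{Z}+[-2s/N,2s/N]$, whenever the product $F_N(x^n-x^{m_1})F_N(x^n-x^{m_2})$ is non-zero both $x^n-x^{m_i}$ lie within $2s/N$ of an integer, so their difference $x^{m_1}-x^{m_2}$ lies within $4s/N$ of an integer. Hence the integrand is dominated pointwise by
$$\chi_{[-2s/N,\,2s/N]}(x^n-x^{m_1})\cdot\chi_{[-4s/N,\,4s/N]}(x^{m_1}-x^{m_2}).$$

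Next, I would partition $[A,A+1]$ by the nearest integer $q$ to $g(x):=x^{m_1}-x^{m_2}$. The function $g$ is strictly increasing and convex on $[A,A+1]$ (for $m_1\geq N_0$ one has $g'(x)\geq \tfrac12 m_1 x^{m_1-1}$), so each $I_q:=\{x\in[A,A+1]:|g(x)-q|\leq 4s/N\}$ is a sub-interval, and the relevant integers lie roughly in $g([A,A+1])$, with smallest value $q_{\min}\geq\lfloor A^{m_1}-A^{m_2}\rfloor-2$. On each $I_q$, Lemma \ref{convexity lemma} applied to $h(x):=x^n-x^{m_1}$ (which is strictly increasing and convex for $n>m_1\geq N_0$, as $x\geq A>1$ makes the $x^{m_1-1}$ and $x^{m_1-2}$ corrections negligible) yields
$$\L\bigl(\{x\in I_q:h(x)\in\mathbb{Z}+[-2s/N,2s/N]\}\bigr)=\O\!\left(\frac{|I_q|}{N}+\frac{1}{N\,h'(\inf I_q)}\right).$$

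Summing over $q$, the first term gives $\O(1/N^2)$ since $\sum_q|I_q|\leq \L(\bigcup_qI_q)=\O(1/N)$ by another application of Lemma \ref{convexity lemma}, this time to $g$ itself. For the second term I would use that $\inf I_q\approx q^{1/m_1}$ (the $x^{m_2}$ correction is of strictly lower order) and therefore $h'(\inf I_q)\gtrsim n q^{(n-1)/m_1}$. The tail sum $\sum_{q\geq q_{\min}}q^{-(n-1)/m_1}$, compared to an integral, is $\O(m_1 q_{\min}^{1-(n-1)/m_1}/(n-m_1))$; writing $q_{\min}^{1-(n-1)/m_1}=q_{\min}^{1/m_1}\cdot q_{\min}^{1-n/m_1}$ with $q_{\min}^{1/m_1}\leq A+1$, and invoking the defining inequality for $N_0$ (namely $(\lfloor A^{m_1}-A^{m_2}\rfloor-2)^{1-n/m_1}\leq A^{(m_1-n)/2}$), produces the claimed $\O(m_1 A^{(m_1-n)/2}/(Nn(n-m_1)))$. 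The main book-keeping obstacle is controlling $\inf I_q$ and $h'(\inf I_q)$ with the correct dependence on $q$; the hypothesis $m_1\geq N_0$ is designed precisely to justify uniformly the approximations $\inf I_q\approx q^{1/m_1}$ and $h'(x)\gtrsim nx^{n-1}$, and to absorb the lower-order contribution coming from $x^{m_2}$.
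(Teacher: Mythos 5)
Your plan is essentially identical to the paper's proof: the same pointwise domination by $\chi_{[-2s/N,2s/N]}(x^n-x^{m_1})\chi_{[-4s/N,4s/N]}(x^{m_1}-x^{m_2})$, the same partition into intervals $I_M$ indexed by the nearest integer to $x^{m_1}-x^{m_2}$, the same two applications of Lemma \ref{convexity lemma} (to $x^n-x^{m_1}$ on each piece and to $x^{m_1}-x^{m_2}$ for the union), and the same integral comparison closed off by the defining inequality for $N_0$. One small bookkeeping point: perform the conversion $q^{(n-1)/m_1}\gtrsim q^{n/m_1}$ (absorbing the bounded factor $q^{1/m_1}\leq A+1$) \emph{before} the integral comparison, as the paper does, so that the exponent exceeds $1$ by at least $1/m_1$ and you get the denominator $n-m_1$ rather than $n-1-m_1$, which would degenerate when $n=m_1+1$.
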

\begin{proof}
Since $F_N$ is positive and bounded above by $\chi_{[\frac{-2s}{N},\frac{2s}{N}]}(x),$ it suffices to show that  $$ \int_{A}^{A+1}\chi_{[\frac{-2s}{N},\frac{2s}{N}]}(x^n-x^{m_1})\chi_{[\frac{-2s}{N},\frac{2s}{N}]}(x^n-x^{m_2}) \,dx =\O\left(\frac{1}{N^2}+\frac{m_1A^{(m_1-n)/2}}{Nn(n-m_1)}\right).$$ Importantly $$\chi_{[\frac{-2s}{N},\frac{2s}{N}]}(x^n-x^{m_1})\chi_{[\frac{-2s}{N},\frac{2s}{N}]}(x^n-x^{m_2})=1\implies \chi_{[\frac{-4s}{N},\frac{4s}{N}]}(x^{m_1}-x^{m_2})=1.$$ Therefore 
\begin{align*}
&\int_{A}^{A+1}\chi_{[\frac{-2s}{N},\frac{2s}{N}]}(x^n-x^{m_1})\chi_{[\frac{-2s}{N},\frac{2s}{N}]}(x^n-x^{m_2})\, dx\\
\leq & \int_{A}^{A+1}\chi_{[\frac{-2s}{N},\frac{2s}{N}]}(x^n-x^{m_1})\chi_{[\frac{-4s}{N},\frac{4s}{N}]}(x^{m_1}-x^{m_2})\, dx.
\end{align*} It therefore suffices to show that 
\begin{equation}
\label{Suffices to show}
\int_{A}^{A+1}\chi_{[\frac{-2s}{N},\frac{2s}{N}]}(x^n-x^{m_1})\chi_{[\frac{-4s}{N},\frac{4s}{N}]}(x^{m_1}-x^{m_2})\, dx=\O\left(\frac{1}{N^2}+\frac{m_1A^{(m_1-n)/2}}{Nn(n-m_1)}\right).
\end{equation} To each $\lfloor A^{m_1}- A^{m_2}\rfloor\leq M\leq \lceil (A+1)^{m_1}- (A+1)^{m_2}\rceil$ we let $$I_{M}:=\left\{x\in [A,A+1):x^{m_1}-x^{m_2}\in \left[M-\frac{4s}{N},M+\frac{4s}{N}\right]\right\}.$$ Importantly each $I_M$ is an interval and 
\begin{equation}
\label{partition}
\left\{x\in [A,A+1):x^{m_1}-x^{m_2}\in \mathbb{Z}+\left[-\frac{4s}{N},\frac{4s}{N}\right]\right\}=\bigcup_{M=\lfloor A^{m_1}- A^{m_2}\rfloor}^{\lceil (A+1)^{m_1}- (A+1)^{m_2}\rceil}I_M.
\end{equation} Note that if $M\neq M'$ then $I_M\cap I_{M'}$ is either empty or a single endpoint. Therefore 
\begin{align*}
&\int_{A}^{A+1}\chi_{[\frac{-2s}{N},\frac{2s}{N}]}(x^n-x^{m_1})\chi_{[\frac{-4s}{N},\frac{4s}{N}]}(x^{m_1}-x^{m_2})\,dx \\=&\sum_{M=\lfloor A^{m_1}- A^{m_2}\rfloor}^{\lceil (A+1)^{m_1}- (A+1)^{m_2}\rceil}\int_{I_{M}}\chi_{[\frac{-2s}{N},\frac{2s}{N}]}(x^n-x^{m_1}) \,dx.
\end{align*}
We let $C_{M}$ denote the left endpoint of $I_M.$ Applying Lemma \ref{convexity lemma} to each $I_M$ we obtain
\begin{align}
\label{Convexity bounds}
\sum_{M=\lfloor A^{m_1}- A^{m_2}\rfloor}^{\lceil (A+1)^{m_1}- (A+1)^{m_2}\rceil}\int_{I_{M}}\chi_{[\frac{-2s}{N},\frac{2s}{N}]}(x^n-x^{m_1}) \,dx&=\O\left(\sum_{M=\lfloor A^{m_1}- A^{m_2}\rfloor}^{\lceil (A+1)^{m_1}- (A+1)^{m_2}\rceil}\frac{\L(I_M)}{N}\right)\\
&+\O\left(\sum_{M=\lfloor A^{m_1}- A^{m_2}\rfloor}^{\lceil (A+1)^{m_1}- (A+1)^{m_2}\rceil}\frac{1}{NnC_M^{n}}\right)\nonumber.
\end{align}
We focus on each term on the right hand side of \eqref{Convexity bounds} individually. Starting with the second term, a simple analysis yields $C_{M}\geq (M-1)^{1/m_1}.$ Therefore
\begin{align}
\label{BoundA}
\sum_{M=\lfloor A^{m_1}- A^{m_2}\rfloor}^{\lceil (A+1)^{m_1}- (A+1)^{m_2}\rceil}\frac{1}{NnC_M^{n}}&\leq \sum_{M=\lfloor A^{m_1}- A^{m_2}\rfloor}^{\lceil (A+1)^{m_1}- (A+1)^{m_2}\rceil}\frac{1}{Nn(M-1)^{n/m_1}}\nonumber\\
&\leq \int_{\lfloor A^{m_1}- A^{m_2}\rfloor}^{\lceil (A+1)^{m_1}- (A+1)^{m_2}\rceil+1}\frac{1}{Nn(x-2)^{n/m_1}}\, dx\nonumber\\
&=\frac{1}{Nn}\left[\frac{(x-2)^{1-n/m_1}}{1-n/m_1}\right]_{\lfloor A^{m_1}- A^{m_2}\rfloor}^{\lceil (A+1)^{m_1}- (A+1)^{m_2}\rceil+1}\nonumber\\
&\leq \frac{m_1(\lfloor A^{m_1}- A^{m_2}\rfloor-2)^{1-n/m_1}}{Nn(n-m_1)}\nonumber\\
&\leq \frac{m_1A^{(m_1-n)/2}}{Nn(n-m_1)}.
\end{align}In the last line we used our assumption $m_1\geq N_0$.

We now focus on the first term on the right hand side of \eqref{Convexity bounds}. Applying \eqref{partition} and Lemma \ref{convexity lemma} we obtain 
\begin{align}
\label{BoundB}
\sum_{M=\lfloor A^{m_1}- A^{m_2}\rfloor}^{\lceil (A+1)^{m_1}- (A+1)^{m_2}\rceil}\frac{\L(I_M)}{N}&=\frac{\L(\{x\in [A,A+1):x^{m_1}-x^{m_2}\in \mathbb{Z}+[\frac{-4s}{N},\frac{4s}{N}]\})}{N}\nonumber\\ &=\frac{1}{N}\left(\O\left(\frac{1}{N}\right)+\O\left(\frac{1}{Nm_1A^{m_1}}\right)\right)\nonumber\\
&=\O\left(\frac{1}{N^2}\right).
\end{align}Substituting \eqref{BoundA} and \eqref{BoundB} into \eqref{Convexity bounds} we deduce that \eqref{Suffices to show} holds. This completes the proof of Lemma \ref{Integral bound2}.
\end{proof}

\begin{proof}[Proof of Lemma \ref{Lemma2}]
	By \eqref{Triangle L2} and \eqref{Integral substitution} we have 
	\begin{eqnarray*}
	& & \left(\int_{A}^{A+1}Y_{k,N}^2\,dx\right)^{1/2} \\
	& \leq & \sum_{n \in \Delta_{k,N}} \left( \int_{A}^{A+1}\sum_{N_0\leq m_1,m_2<n} \left( F_{N}(x^n-x^{m_1}) F_{N}(x^n-x^{m_2})\, dx + \mathcal{O} \left( \frac{1}{N^2} \right) \right)\right)^{1/2} \\
	& & + \O\left(N^{1/10}\right).
	\end{eqnarray*}
Using Lemma \ref{Integral bound} and Lemma \ref{Integral bound2} we obtain
\begin{align}
\label{Constant bound}
&\left(\int_{A}^{A+1}Y_{k,N}^2\,dx\right)^{1/2}\nonumber\\
 \leq& \sum_{n \in \Delta_{k,N}} \left( \sum_{N_0\leq m_2<m_1<n}  \O\left(\frac{1}{N^2}+\frac{m_1A^{(m_1-n)/2}}{Nn(n-m_1)}\right) +\sum_{m=N_0}^n \O\left(\frac{1}{N}\right)\right)^{1/2}\\
& + \O\left(N^{1/10}\right)\nonumber.
\end{align}
Note that
$$
\sum_{m=N_0}^n\frac{1}{N}\leq 1\textrm{ and }\sum_{N_0\leq m_2<m_1<n}\frac{1}{N^2} \leq 1,
$$
since $n \leq N$. Furthermore, we observe that
\begin{align*}
\sum_{N_0\leq m_2<m_1<n}\frac{m_1A^{(m_1-n)/2}}{Nn(n-m_1)}&=\sum_{m_2=N_0}^{n-2}\sum_{m_1=m_2+1}^{n-1}\frac{m_1A^{(m_1-n)/2}}{Nn(n-m_1)}\\
&\leq \sum_{m_2=N_0}^{n-2}\frac{1}{N}\sum_{m_1=m_2+1}^{n-1}\frac{A^{(m_1-n)/2}}{(n-m_1)}\\
&\leq \sum_{m_2=N_0}^{n-2}\frac{1}{N(A^{1/2}-1)}.\\
&=\O(1).
\end{align*}
In the penultimate inequality we used properties of geometric series, and in the final line we used that $n\leq N$. We have shown that the sum of the terms in the bracket in \eqref{Constant bound} is $\O(1)$. Finally, using this bound together with $\# \Delta_{k,n} =N^{1/10}$ we arrive at
$$
\left(\int_{0}^{1}Y_{k,N}^2\,dx\right)^{1/2} = \mathcal{O} (N^{1/10}),  
$$
which proves Lemma \ref{Lemma2}.
\end{proof}

\subsection{Proof of Theorem \ref{Approximating thm}}

Throughout this proof we will use primed summation signs, such as $\sideset{}{'} \sum_k$, to indicate that the summation is restricted such that it only contains \emph{odd} values of $k$. By Markov's inequality and Proposition \ref{Slow growth} we have
\begin{align*}
&\L\left(x\in[A,A+1):\left| \sideset{}{'}\sum_{{1\leq k\leq N^{9/10}}} Y_{k,N}\right|\geq N^{6/10} \right)\\
=&\L\left(x\in[A,A+1):\left(\sideset{}{'} \sum_{1\leq k\leq  N^{9/10}}Y_{k,N}\right)^2\geq N^{12/10}\right)\\
=&\O\left(\frac{1}{N^{1/10}}\right).
\end{align*}Restricting to $20$th powers this bound implies
$$\L\left(x\in[A,A+1):\left| \sideset{}{'}\sum_{1\leq k\leq N^{18}} Y_{k,N^{20}}\right|\geq N^{12} \right)=\O\left(\frac{1}{N^2}\right).$$ Clearly $$\sum_{N=1}^{\infty}\frac{1}{N^{2}}<\infty.$$ Therefore by the Borel-Cantelli lemma it follows that for Lebesgue almost every $x\in [A,A+1)$ the inequality 
$$\left| \sideset{}{'}\sum_{1\leq k\leq N^{18}} Y_{k,N^{20}}\right|\geq N^{12}$$
holds for at most finitely many $N$. Therefore Lebesgue almost every $x\in [A,A+1)$ satisfies 
\begin{equation}
\label{odd convergence}
\lim_{N\to\infty}\frac{ \sideset{}{'}\sum_{1\leq k\leq N^{18}} Y_{k,N^{20}}(x)}{N^{20}}=0.
\end{equation} 
An analogous result holds if the summation is extended over all even, instead of all odd, values of $k$ in the specified range. It follows from the definition of $Y_{k,N}$ that for Lebesgue almost every $x\in [A,A+1)$ we have $$\lim_{N\to\infty}\frac{\sum_{1\leq m\neq n\leq N^{20}}G_{N^{20}}(x^n-x^m)}{N^{20}}=0.$$ Recall that $G_{N}=F_N-\int_{0}^{1}F_N\, dx$ and $\int_{0}^{1}F_N\, dx=\frac{2s}{N}+\O(N^{-2})$. Using this information in the equation above, we deduce that for Lebesgue almost every $x\in [A,A+1)$ we have $$\lim_{N\to\infty}\frac{\sum_{1\leq m\neq n\leq N^{20}}F_{N^{20}}(x^n-x^m)}{N^{20}}=2s.$$
This proves Theorem \ref{Approximating thm}.

\section{Proof of Theorem \ref{Main theorem}}
\label{Main theorem proof}

	With Theorem \ref{Approximating thm} we can now prove Theorem \ref{Main theorem}. Let $s>0$ be arbitrary. We can define two sequences of differentiable functions $(F_N^{1})_{N=1}^{\infty}$ and $(F_N^{2})_{N=1}^{\infty}$ satisfying the hypotheses of Theorem \ref{Approximating thm}, which also satisfy $F_N^{1}(x)\leq \chi_{[\frac{-s}{N},\frac{s}{N}]}(x)\leq F_N^{2}(x)$ for all $x\in \R$ and $N\in \N$. Theorem \ref{Approximating thm} therefore implies that Lebesgue almost every $x\in [A,A+1)$ satisfies
	\begin{equation}
	\label{Good behaviour}
	\lim_{N\to\infty}\frac{\sum_{1\leq m\neq n\leq N^{20}}\chi_{[\frac{-s}{N^{20}},\frac{s}{N^{20}}]}(x^n-x^m)}{N^{20}}=2s.
	\end{equation} Let $S\subset (0,\infty)$ be a countable and dense subset. Since the parameter $s$ above was arbitrary, for Lebesgue almost every $x\in [A,A+1),$ equation \eqref{Good behaviour} indeed holds for all $s\in S$. Using the density of $S$ and an approximation argument, it follows that for Lebesgue almost every $x\in [A,A+1)$ \eqref{Good behaviour} holds for all $s\in(0,\infty)$ .

	Now we show how to remove the restriction to $20$th powers. To any $N\in \N$ we associate the quantity $M_N\in \N$ defined via the inequalities $$M_N^{20}\leq N<(M_N+1)^{20}.$$ Observe that
	$$
	\frac{(M_N+1)^{20}}{M_N^{20}} \to 1 \qquad \text{as $N \to \infty$}.
	$$
	Let $\epsilon>0$ be arbitrary. From \eqref{Good behaviour} we deduce that for Lebesgue almost every $x\in[A,A+1),$ for every $s>0$, we have
	\begin{align*}
	&\limsup_{N\to\infty}\frac{\sum_{1\leq m\neq n\leq N}\chi_{[\frac{-s}{N},\frac{s}{N}]}(x^n-x^m)}{N}\\
	\leq & \limsup_{N\to\infty}\frac{\sum_{1\leq m\neq n\leq (M_N+1)^{20}}\chi_{[\frac{-s-\epsilon}{(M_N+1)^{20}},\frac{s+\epsilon}{(M_N+1)^{20}}]}(x^n-x^m)}{M_N^{20}}\\
	= & \limsup_{N\to\infty}\frac{(M_N+1)^{20}}{M_N^{20}}\frac{\sum_{1\leq m\neq n\leq (M_N+1)^{20}}\chi_{[\frac{-s-\epsilon}{(M_N+1)^{20}},\frac{s+\epsilon}{(N_{M}+1)^{20}}]}(x^n-x^m)}{(M_N+1)^{20}}\\
	= &2(s+\epsilon).
	\end{align*} Since $\epsilon$ was arbitrary, we see that for Lebesgue almost every $x\in[A,A+1),$ for every $s>0$ we have
	$$\limsup_{N\to\infty}\frac{\sum_{1\leq m\neq n\leq N}\chi_{[\frac{-s}{N},\frac{s}{N}]}(x^n-x^m)}{N}\leq2s.$$ The corresponding lower bound can be obtained analogously. Therefore, for Lebesgue almost every $x\in[A,A+1),$ for every $s>0$ we have $$\lim_{N\to\infty}\frac{\sum_{1\leq m\neq n\leq N}\chi_{[\frac{-s}{N},\frac{s}{N}]}(x^n-x^m)}{N}=2s.$$ Since $A>1$ was arbitrary, we see that for Lebesgue almost every $x>1,$ for every $s>0$ we have 
	$$\lim_{N\to\infty}\frac{\sum_{1\leq m\neq n\leq N}\chi_{[\frac{-s}{N},\frac{s}{N}]}(x^n-x^m)}{N}= 2s.$$
	This completes the proof of Theorem \ref{Main theorem}.\\

\noindent \textbf{Acknowledgements.} The first author is supported by the Austrian Science Fund (FWF), projects F-5512, I-3466 and Y-901.

\end{document}